\documentclass[11pt]{amsart}
\usepackage{amsmath,amsthm,amsfonts,amssymb,latexsym}
\usepackage[all,2cell,dvips]{xy}

\newtheorem{theorem}{Theorem}[section]
\newtheorem{lemma}[theorem]{Lemma}
\newtheorem{proposition}[theorem]{Proposition}
\newtheorem{corollary}[theorem]{Corollary}

\theoremstyle{definition}

\theoremstyle{remark}

\newcommand{\N}{\mathbb{N}}
\newcommand{\Z}{\mathbb{Z}}
\newcommand{\Q}{\mathbb{Q}}
\newcommand{\R}{\mathbb{R}}

\newcommand{\vc}{\operatorname{vc}}
\newcommand{\VC}{\operatorname{VC}}

\newcommand{\beq}{\begin{equation} \setlength\abovedisplayskip{5pt} 
\setlength\belowdisplayskip{5pt}}
\newcommand{\eeq}{\end{equation}}
\newcommand{\bea}{\begin{eqnarray}}
\newcommand{\eea}{\end{eqnarray}}

\def \<{\langle}
\def \>{\rangle}

\def \((  {(\!(}
\def \)) {)\!)}

\numberwithin{equation}{section}

\allowdisplaybreaks[2]

\begin{document}

\bibliographystyle{plain}
\title{VC-density in Divisible Oriented Abelian Groups and their Pairs}
 
\author{Ebru Nay\.Ir}

\address{Bo\u{g}azi\c{c}i \"{U}niversitesi, Istanbul, Turkey}

\email{ebru.nayir@std.bogazici.edu.tr}
\email{ebrunayirr@gmail.com}

\author{Mel\.Issa Özsahakyan}

\address{Mimar Sinan Fine Arts University, Istanbul, Turkey}

\email{melissa.ozsahakyan@msgsu.edu.tr}



\maketitle
\begin{abstract}
We show that the VC-density in certain theories of oriented abelian groups is at most the size of parameter variables, which yields dp-minimality. We further prove that the VC-density of formulas in pairs of such models is bounded by twice the size of parameter variables. This uniform upper bound is shown to be sharp, and as a consequence, we show that such pairs have dp-rank $2$.
\end{abstract}

\section{Introduction}
In this paper, we study oriented abelian groups and their pairs, with a focus on understanding the combinatorial complexity of definable sets via VC-density. An \textit{oriented abelian group} $A$ is a group equipped with a cyclic ternary relation $\mathcal{O}$, called \textit{orientation}, that is preserved under group operation such that the set 
$\big\{(x,y)\in A^2: x\neq y,\ \mathcal{O}(0,x,y)\big\}$
forms a strict linear order on $A\setminus \{0\}$. We denote this linear order by $x<_0y$. These groups form topological groups, where the basic open sets are \textit{orientation intervals} $(a,b)_\mathcal{O}$, consisting of elements between $a$ and $b$ in the counterclockwise direction. An oriented abelian group $A$ is called \textit{regularly dense} if it satisfies an additional condition: for each prime $p$ and all $a,b\in A$ with $\mathcal{O}(0,a,b)$, there exists an element $c\in A$ such that $\mathcal{O}(a,pc,b)$ and $ic<_0(i+1)c$ for all $i\in \{1,\cdots,p-1\}$. 
As established in \cite{melissa}, certain oriented abelian groups satisfy several tame structural properties, such as $\mathcal{O}$-minimality—a notion analogous to o-minimality for ordered abelian groups. In particular, $\mathcal{O}$-minimality implies NIP. We only recall some useful facts for our purposes in Section~\ref{oriented_groups_section} and for more details, we refer \cite{melissa} to the reader. We then focus on pairs of divisible oriented abelian groups $(A,G)$, where $A$ is a regularly dense oriented abelian group, $G$ is a dense subgroup of $A$, and $A/G$ is infinite. We extend the language of oriented abelian groups $L_{\mathcal{O}}:=\{+,-,0,\mathcal{O}\}$ with a unary predicate $V$, so  $L_{\mathcal{O}}^p:=L_{\mathcal{O}}\cup \{V\}$ is the language of pairs of oriented abelian groups, where $V$ is interpreted as the dense subgroup of the universe. The highlighted result of Section \ref{oriented_groups_section} is as follows.

\begin{theorem}\label{main_thm_qe}
In each of the following cases, the $L_{\mathcal{O}}^p$-theory of pairs $(A,G)$ of divisible oriented abelian groups admits  quantifier elimination:
\begin{enumerate}
    \item $A$ and $G$ are both torsion groups, or
    \item $A$ is torsion and $G$ is torsion-free, or
    \item $A$ and $G$ are both torsion-free.
\end{enumerate}
 \end{theorem}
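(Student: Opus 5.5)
I will use the standard back-and-forth criterion for quantifier elimination. Fix two $\aleph_0$-saturated models $(A,G)$ and $(B,H)$ of the relevant theory, with $(B,H)$ being $|A|^+$-saturated. Let $f\colon C\to D$ be an isomorphism between substructures $C\subseteq A$ and $D\subseteq B$, so $f$ preserves $+$, $\mathcal{O}$ and $V$. It suffices to extend $f$ to any $a\in A\setminus C$. A first reduction replaces $C$ by its divisible hull. In the torsion cases I also close under the relevant roots of unity. This uses the fact, recalled in Section~\ref{oriented_groups_section}, that orientation on a divisible hull is determined by the orientation on $C$ together with regular density. I then split into two extension steps. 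The \emph{$V$-step} adjoins elements of $G$. The \emph{generic step} adjoins $a\notin G+C$.

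For the $V$-step, take $g\in G\setminus C$. By the $\mathcal{O}$-minimality results of \cite{melissa}, the quantifier-free type of $g$ over $C$ in $L_{\mathcal{O}}$ is determined by its cut, meaning the orientation intervals with endpoints in $C$ that contain it, together with its divisibility and torsion data over $C$. In case (1), $G$ torsion makes this data finite. In cases (2) and (3), $G$ torsion-free means $g$ generates a free $\Z$-module over $C$, so it reduces to the cut of $g$ over $C$ and the cuts of its rational multiples. The key requirement is a realization $h\in H$ of the image type. I obtain it from density of $H$ in $B$: every nonempty orientation interval meets $H$. Saturation then lets me realize the whole partial type consisting of $V(x)$ plus the cut conditions. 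One must check that $\langle C,g\rangle\cap G=\langle C\cap G,g\rangle$. This holds because $C$ was taken $V$-closed in the appropriate sense. To arrange it, I first add to $C$ elements of $G\cap\mathrm{div}(C)$, which means handling $G\cap \operatorname{dcl}(C)$ first.

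For the generic step, take $a$ with $a\notin G+C$. I need $b\in B$ with the same $L_{\mathcal{O}}$-cut over $D$ such that $(\Q b+D)\cap H=D\cap H$, or in the torsion case $(\langle b\rangle +D)\cap H = D\cap H$. I use infinite index of $H$ in $B$, which holds since $A/G$ infinite transfers to $B/H$. Every nonempty orientation interval contains elements from infinitely many cosets of $H$, because $H$ is dense and the cosets are translates. By compactness and saturation, I can then choose $b$ in the required cut and avoiding the countably many, or small, forbidden sets $\{x: qx+d\in H\}$.

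The main obstacle is the interaction between orientation, torsion and $V$, in particular in case (2), where $A$ is torsion but $G$ is torsion-free. There one must confirm that $V$-membership of $g$ and of its multiples is compatible with the cyclic order on the torsion part. I would handle it by projecting to the torsion-free quotient given by the structure theory in Section~\ref{oriented_groups_section} and reducing to the argument of case (3). The mixed case where $A$ is torsion-free and $G$ is torsion is excluded because it cannot occur.
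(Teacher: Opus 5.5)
\textbf{There is a genuine gap, in case (2).} It sits exactly where you located the ``main obstacle'', and it cannot be closed, because case (2) of the statement is false.

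\emph{Where your argument breaks.} Your first reduction extends $f$ to the divisible hull of $C$ closed under torsion. This requires that $V$ on the hull be determined by $V$ on $C$, i.e.\ that $nx\in G$ implies $x\in G$. That holds in case (1), since $\operatorname{Tor}(G)=\operatorname{Tor}(A)$, and in case (3), by unique divisibility. In case (2), however, $\{x: nx\in G\}=G+\operatorname{Tor}_n(A)$. So for $c\in C\cap G$ exactly one of the $n$ roots of $c$ lies in $G$. The quantifier-free type of $c$ does not say which one, i.e.\ where that root sits relative to the torsion. Hence $f$ need not extend to the hull. ``Projecting to a torsion-free quotient'' does not help: $A/\operatorname{Tor}(A)$ carries no induced orientation (for $\mathbb{S}$ it is $\R/\Q$).

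\emph{Counterexample to case (2).} Let $A=\R/\Z$, let $s\in(0,1)$ be irrational, and let $G=\{qs \bmod 1: q\in\Q\}$ (the paper's $A_s$). Then $(A,G)$ is a model of $T\big((\vec{0},\vec{\infty}),(\vec{0},\vec{0})\big)$. Let $\phi(x)$ be
\[
\exists y\,\big(V(y)\wedge 2y=x\wedge \mathcal{O}(0,y,2y)\big).
\]
For $x\in G$, $\phi(x)$ says that the unique $y\in G$ with $2y=x$ lies in the arc $(0,\tfrac12)$.
\begin{itemize}
\item $\phi(s)$ holds, with $y=s/2$.
\item Take $m$ odd and $q\in\Q$ close to $1+m/s$. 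Then $x=qs \bmod 1$ is arbitrarily close to $s$, and its $G$-half is $\equiv s/2+\tfrac12+\varepsilon/2$, so $\neg\phi(x)$.
\end{itemize}
Now consider any quantifier-free $\psi(x)$. It is a Boolean combination of $kx=lx$, of $V(kx)$ (constantly true on $G$), and of $\mathcal{O}(k_1x,k_2x,k_3x)$ (finite unions of arcs with torsion endpoints). Hence $\psi$ is constant on $G\cap U$ for some neighbourhood $U$ of the non-torsion point $s$. Since $\phi$ takes both truth values on every such $G\cap U$, it has no quantifier-free equivalent.

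\emph{Cases (1) and (3).} Your back-and-forth is sound here, with two adjustments. In case (3), cite QE of $T(\vec 0,\vec 0)$ rather than $\mathcal{O}$-minimality, which fails there. Your worry about $V$-closedness is vacuous: $g\in G$ already gives $(C+\Z g)\cap G=(C\cap G)+\Z g$.

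Your route also differs from the paper's. The paper applies Proposition~\ref{fact} to the divisible hull $(C',D')$ of an arbitrary common substructure to get elementary embeddings. That proposition needs $D'$ dense in $C'$ and $C'/D'$ infinite, which fails in general: for $c\notin G$, $(\Q c,0)$ is not a model. So your approach is the more reliable one for (1) and (3), and the paper's argument does not establish (2) either.
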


In Section \ref{main_result_section}, we turn our focus to computing \textit{the VC-density} of formulas in the theories described above. The notion of VC-density is a refinement of \textit{VC-dimension} (Vapnik and Chervonenkis-dimension). Laskowski was the first to give a characterization of NIP theories via VC-dimension, thereby introducing this notion into model theory. Indeed, he proved that a complete first-order theory has NIP if and only if VC-dimension of any formula is finite (See \cite{L}). For further background, we refer \cite{VC-density-I} to the reader and do not go into any other details for the notions VC-dimension and VC-density.

\medskip
A crucial concept for our purposes is that of \textit{uniform definability of types over finite sets with parameters (UDTFS)} (See \cite{Guingona}). In \cite{VC-density-I}, Aschenbrenner et al. introduce a stronger version of this notion, namely the \textit{VC $d$ property}, that ensures a uniform bound on the VC-density of formulas in terms of the size of parameter variables. In particular, VC $1$ property implies dp-minimality and yields a linear bound on VC-density. We give necessary tools in Section \ref{main_result_section}, following \cite{VC-density-I} as our main reference. Using the quantifier elimination results of the theories of interest established in Section \ref{oriented_groups_section}, we actually prove the following results. 

\begin{theorem}\label{main_thm1}
    The $L_{\mathcal{O}}$-theories of divisible oriented abelian groups have the VC $1$ property.  
    \end{theorem}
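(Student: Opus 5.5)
The plan is to adapt the o-minimal argument of \cite{VC-density-I} to the cyclic setting, using the quantifier elimination and $\mathcal{O}$-minimality of divisible oriented abelian groups recalled in Section~\ref{oriented_groups_section} (from \cite{melissa}). By \cite{VC-density-I}, it suffices to verify the VC $1$ property for formulas $\varphi(x;y)$ with $x$ a single variable. That is, I must find finitely many formulas $\psi(y;y_1)$ such that for every finite $B\subseteq A^{|y|}$ and every $a\in A$, the set $\{b\in B: \models\varphi(a;b)\}$ equals $\{b\in B:\models\psi(b;b_1)\}$ for some $b_1\in B$ and some $\psi$ in the list. A finite list of $\psi$'s can be merged into one in the usual way, by coding the case distinction into the parameter tuple, or by adding the finitely many needed constants. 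I also intend to name the finitely many torsion elements of each order $n$ that occur in $\varphi$ by constants. Adding constants does not affect the VC $1$ property, since $\psi$ may use them.

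\emph{Step 1 (uniform cell decomposition).} By quantifier elimination, $\varphi(x;y)$ is a Boolean combination of atomic formulas $nx=t(y)$ and $\mathcal{O}(n_1x+t_1(y),\,n_2x+t_2(y),\,n_3x+t_3(y))$. Each such atomic formula, for fixed $b$, defines a finite union of points and orientation intervals. The endpoints are the solutions of equations $nx=s(b)$. By divisibility these are $x=s(b)/n+\tau$, where $\tau$ ranges over the $n$-torsion, a finite set of size at most $n$; this holds both in the torsion case and in the torsion-free case (where $\tau=0$). So, after fixing a $0$-definable choice of $s(b)/n$ (available because the group is divisible and we may take the unique root in a fixed orientation arc, or after naming constants), I obtain finitely many definable functions $f_1,\dots,f_N$ such that for every $b$ the set $\varphi(A;b)$ is a union of some of the points $f_i(b)$ and some of the open orientation arcs between cyclically consecutive points of $\{f_i(b)\}$.

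\emph{Step 2 (the defining formulas).} Given finite $B$ and $a\in A$, I consider the finite set $E=\{f_i(b): i\le N,\ b\in B\}$. There are two cases.
\begin{enumerate}
\item If $a\in E$, say $a=f_i(b_1)$, then I use $\psi_i^{=}(y;y_1):=\varphi(f_i(y_1);y)$.
\item Otherwise, let $e=f_i(b_1)$ be the nearest element of $E$ clockwise before $a$, so that no element of $E$ lies in the arc $(e,a]_{\mathcal O}$. For every $b\in B$, the set $\varphi(A;b)$ is then constant on the arc $(e,a]_{\mathcal O}$, because no endpoint of $\varphi(A;b)$ lies in it. I take $\psi_i^{+}(y;y_1)$ to say that $\varphi(z;y)$ holds for all $z$ in some arc $(f_i(y_1),w)_{\mathcal O}$, i.e.\ that $\varphi$ holds just counterclockwise after $f_i(y_1)$.
\end{enumerate}
The degenerate case $E=\emptyset$ is handled by a constant formula. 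This gives $2N+1$ formulas with a single parameter $y_1$, which is exactly the VC $1$ property.

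\emph{Main obstacle.} I expect the main difficulty to be Step 1. The cell endpoints must be given by genuinely definable functions of $b$, uniformly in $b$, even in the presence of torsion, where $nx=s$ has $n$ solutions and no canonical one. My first attempt will be to enumerate all solutions as $f_{i,\tau}(b)=r(s(b))+\tau$, where $r$ is some $0$-definable $n$-th root (for instance, a root chosen in the arc from $0$ to the first nonzero $n$-torsion point), and $\tau$ ranges over the named $n$-torsion constants. If no such root is $0$-definable, I will instead let the parameter in $\psi$ be $b_1$ and quantify existentially over the finitely many roots inside $\psi$, choosing the root nearest to $a$ clockwise. This keeps one parameter, at the cost of a slightly more involved $\psi$. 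The cyclic (rather than linear) nature of the order only affects the case $E=\emptyset$ and the wrap-around in "nearest clockwise", and both are routine.
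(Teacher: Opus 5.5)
There is a genuine gap. Your argument works for $T(\vec 0,\vec\infty)$, which is $\mathcal{O}$-minimal, but the conclusion of Step~1 is false for the torsion-free theory $T(\vec 0,\vec 0)$. Section~\ref{oriented_groups_section} recalls $\mathcal{O}$-minimality only for $T(\vec 0,\vec\infty)$, and the failure sits exactly in the wrap-around you call routine. In a torsion-free divisible group, $x\mapsto nx$ is a bijection that winds $n$ times around. The $n-1$ places (other than $x=0$) where it passes through $0$ are \emph{missing} $n$-torsion points, i.e.\ cuts not realized in $A$.

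For instance, in $A_a\subseteq[0,1)$ the $0$-definable set $\{x:\mathcal{O}(0,x,2x)\}$ is $A_a\cap(0,\tfrac12)$, and $\tfrac12\notin A_a$. Yet the associated equations $x=0$, $2x=0$, $2x=x$ have only the solution $0$. So functions $f_1,\dots,f_N$ as in Step~1 do not exist, and the claim in Step~2 that $\varphi(A;b)$ is constant on $(e,a]_{\mathcal{O}}$ fails.

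This breaks the conclusion, not just the justification. Take $\varphi(x;y):=\mathcal{O}(0,2x,y)\wedge\mathcal{O}(0,x,y)$. Then $\varphi(A_a;b)=A_a\cap(0,b/2)$ for $b<\tfrac12$, and $\varphi(A_a;b)=A_a\cap\big((0,b/2)\cup(\tfrac12,b)\big)$ for $b>\tfrac12$. Choose $B=\{b_1,b_2\}$ with $b_1<\tfrac12<b_2<\min(2b_1,\tfrac{1+b_1}{2})$ and $b_1/2\notin A_a$; such $b_1$ are dense, e.g.\ $b_1=qa-m$ with $q\in\mathbb{Q}$ and $m$ odd. Then the root of $2x=b_1$ in $A_a$ is $\tfrac{1+b_1}{2}$.
\begin{itemize}
\item Every $a\in A_a\cap(\tfrac12,b_2)$ has $\varphi$-type $\{b_2\}$ over $B$.
\item Your recipe selects $e=b_1$.
\item For every choice of parameter in $B$, each of your $\psi_i^{=}$ and $\psi_i^{+}$ cuts out either $\emptyset$ or all of $B$. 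These are built from your endpoint functions: $0$, the root of $2x=y$, and $y$ itself.
\end{itemize}
So the type $\{b_2\}$ is not defined by anything in your list.

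The fix is to work with cuts rather than points. For each equation $nx=s(y)$ (with $n\ge1$) coming from the atoms of $\varphi$, and each $0\le j<n$, use the cut lying $j/n$ of a turn past the root. In $A_a$ these are the reals $(s(b)+j)/n$, exactly one of which lies in $A_a$.
\begin{itemize}
\item The arc from $0$ to such a cut is uniformly definable from $b$: count how many of the steps from $ix$ to $(i+1)x$, $i<n$, pass $0$, then compare $nx$ with $s(b)$.
\item Every boundary of every $\varphi(A;b)$ is among these cuts.
\item Step~2 then goes through verbatim. Use the nearest cut clockwise before $a$, which is still attached to a single $b_1\in B$, and let $\psi^{+}$ say that $\varphi(z;y)$ holds on an arc immediately after that cut.
\end{itemize}
This is the cyclic analogue of the weakly o-minimal case.

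The paper takes a different route that avoids endpoint analysis altogether. After quantifier elimination (Theorem~\ref{qe_orientedgps}), it applies Theorem~\ref{keytool} and Proposition~\ref{breadth} to the singletons $\{\vec k\vec b\}$ and the arcs $(0,\vec k\vec b)_{\mathcal{O}}$. These are nested, hence of breadth $1$. Be aware that, for the same wrap-around reason, atoms in which $x$ has a coefficient $|n|\ge 2$ (such as $\mathcal{O}(0,2x,y)$) are not Boolean combinations of those basic formulas either. The natural repair there is likewise to add the arcs from $0$ to the possibly unrealized cuts above; these are still nested, so breadth $1$ survives.
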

    
     \begin{theorem}\label{main_thm2}
The $L_{\mathcal{O}}^p$-theories of pairs of divisible oriented abelian groups described in Theorem \ref{main_thm_qe}  have the VC $2$ property.
 \end{theorem}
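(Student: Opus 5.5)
We prove the VC $2$ property for the theories $T$ of Theorem \ref{main_thm_qe}. Recall the criterion from \cite{VC-density-I}: $T$ has the VC $d$ property if for every formula $\varphi(x;y)$ with $|x|=1$ there are finitely many formulas $\psi_1(y;z_1),\dots,\psi_k(y;z_k)$, with each $z_i$ a tuple of $d$ copies of the $y$-variables, such that the following holds. For every finite set $B$ of parameters with $|B|\ge 2$ and every $a$ in a model, the type $\mathrm{tp}_\varphi(a/B)$ is defined by some $\psi_i(y;b)$ with $b$ a $d$-tuple from $B$. The paper notes that VC $d$ follows from one-variable uniform definability with $d$ parameters, so it suffices to treat $|x|=1$.

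\textbf{Step 1: reduction to atomic formulas.} By the quantifier elimination of Theorem \ref{main_thm_qe}, every $\varphi(x;y)$ is a Boolean combination of atomic $L^p_{\mathcal O}$-formulas. Definability with $d$ parameters passes to Boolean combinations by a standard coding argument, as in the closure lemmas of \cite{VC-density-I}. So we may assume $\varphi$ is atomic.

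In divisible groups each term reduces to the form $nx-t(y)$. Hence the atomic formulas are of three kinds:
\begin{itemize}
\item equations $nx=t(y)$;
\item orientation formulas $\mathcal O(n_1x-t_1(y),\,n_2x-t_2(y),\,n_3x-t_3(y))$;
\item predicate formulas $V(nx-t(y))$.
\end{itemize}
In the torsion cases one also has to handle the finitely many solutions of $nx=c$. These can be dealt with by splitting by congruence classes, using the facts recalled in Section \ref{oriented_groups_section}.

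\textbf{Step 2: the order part, giving one parameter.} For the $L_{\mathcal O}$-atomic formulas we follow the proof of Theorem \ref{main_thm1}, which presumably mirrors the o-minimal argument. After translating, the set of points $t(b)$ for $b\in B$ cuts the circle into finitely many arcs. The orientation type of $a$ over $B$ is then determined by the cut of $na$ in the cyclic order of $\{t(b):b\in B\}$. This cut is pinned down by its nearest predecessor, i.e.\ by one $b$ together with a formula saying that no $t(b')$ lies in the open arc from $t(b)$ to $na$. Because the order is cyclic rather than linear, one should be careful here: "predecessor" must be measured counterclockwise from a fixed base point. I would handle this with a single parameter plus a case split on the position relative to $0$.

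\textbf{Step 3: the $V$-part, giving the second parameter.} For $V(nx-t(y))$ the key observation is this. If $na-t(b)\in V$ and $na-t(b')\in V$, then $t(b)-t(b')\in V$. So the set of $b\in B$ satisfying $V(na-t(b))$ is exactly the $V$-coset class of $t(b_0)$ within $\{t(b):b\in B\}$, for any one $b_0$ in that set. It is therefore defined by $V(t(y)-t(b_0))$, with one parameter, or by $\bot$ if the set is empty.

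Combining with Step 2, a mixed Boolean combination needs one parameter for the order cut and one for the coset, which gives $2$ in total. These must be combined uniformly: we pair an order-defining formula with a coset-defining formula and take finitely many Boolean schemes.

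\textbf{Main obstacle.} The hardest point is that Boolean closure in general adds parameter counts. The work is in showing that every $\varphi$-type still needs only one order witness and one coset witness. This has to hold even when $\varphi$ involves several terms $t_i$ and several multipliers $n$.

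I would first normalize by replacing $x$ by $Nx$, with $N$ the product of all the $n$'s; this is legitimate up to finitely many torsion cases. After that, all terms are $x-s_j(y)$, and one works with the single finite set $\bigcup_j s_j(B)$. The order type of $a$ is given by one cut in this set, and the $V$-type by one coset class. Both are uniform over the finitely many $j$, so the number of defining schemes is bounded in terms of $\varphi$ alone.
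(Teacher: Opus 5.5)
Your underlying idea is the paper's: the orientation atoms contribute a chain, and the $V$-atoms contribute cosets of $G$, which are pairwise equal or disjoint. So one ``order witness'' plus one ``coset witness'' suffice. The paper packages this as a breadth count. For a finite $\Delta$ built from $x=\vec k\vec y$, $\mathcal{O}(0,x,\vec k\vec y)$ and $V(x+\vec k\vec y)$, the family $\mathcal{S}_\Delta$ (singletons, arcs $(0,c)_{\mathcal{O}}$, cosets of $G$) has breadth $2$, and Proposition~\ref{breadth} together with Theorem~\ref{keytool} finishes. Your final paragraph, with one $<_0$-predecessor and one coset representative chosen uniformly over all terms, is a hand-made version of that argument, which is fine. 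However, two steps fail as written.

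First, Step~1 is false. UDTFS with $d$ parameters for $\varphi_1$ and for $\varphi_2$ separately only yields $2d$ parameters for $\varphi_1\wedge\varphi_2$, as you yourself observe later. What Boolean combinations inherit is UDTFS of a finite \emph{set} $\Delta$. So you must define the whole $\Delta$-type of $a$ over $B$ at once, where $\Delta$ is the set of all atoms of $\varphi$. Done that way, there is no ``main obstacle''. When $x$ has coefficient $1$, every orientation atom is a Boolean combination of conditions on $y$ alone, equalities $x=t(y)$, and initial segments $(0,t(y))_{\mathcal{O}}$ of $<_0$. These segments form a chain, so a single predecessor decides all of them simultaneously. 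Every $V$-atom defines a coset of $G$, so a single witness decides all of those.

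Second, the normalization ``replace $x$ by $Nx$'' is not legitimate for orientation atoms, and torsion is not the issue. The map $x\mapsto Nx$ does not preserve $\mathcal{O}$; think of $A_a\subseteq\R/\Z$, where doubling wraps the circle around itself twice. Consequently the $\mathcal{O}$-type of $Na$ over $B$ does not determine, for example, whether $\mathcal{O}(0,a,b)$ holds. Also, in case (2) of Theorem~\ref{main_thm_qe}, $V(nx-t)$ is not equivalent to $V(Nx-(N/n)t)$, because $G\cap\operatorname{Tor}(A)=0$.

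The repair is a parameter-free case split. The finitely many sectors on which $x\mapsto Nx$ is $<_0$-increasing are quantifier-free $\emptyset$-definable: count how often $x,2x,\dots,Nx$ wraps past $0$. For a suitable $N$ (a common multiple of the coefficients of $x$ and their differences), on each sector every atom of $\Delta$ becomes a Boolean combination of conditions on $y$ alone and comparisons of $Nx$ with finitely many terms $s(y)$. In case (2), a $V$-witness determines $a+G$ up to translation by one of the finitely many $\emptyset$-definable elements of $\operatorname{Tor}_N(A)$. These splits multiply the number of defining schemes but not the number of parameters, so the bound $2$ survives.

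Note that the paper's $\Omega$ contains only atoms in which $x$ has coefficient $1$, so it does not treat multipliers at all. Your concern is legitimate. Here your direct route is the more convenient one: parameter-free case distinctions cost nothing for UDTFS, whereas in a breadth count they have to be built into the set family.
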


We also show that the upper bound derived from the second theorem is optimal.  It is already known that divisible oriented abelian groups with torsion, as well as pairs whose universe is divisible and torsion, have NIP, as shown in \cite{melissa}.  
As a by-product, we also deduce that all theories in Theorem \ref{main_thm_qe} have NIP and dp-rank $2$.

\medskip \emph{Notations and Conventions.}
Throughout this paper, an oriented abelian group is always assumed to be regularly dense. For an oriented abelian group $A$, we denote the subgroup of $p$-torsion elements of $A$ by $\operatorname{Tor}_p(A)$, and the subgroup of all torsion elements by $\operatorname{Tor}(A)$.
For any abelian group $A$, we use the notation $[p]A$ for the cardinality of the quotient group $A/pA$.
We denote tuples of variables by $\vec{x}$, and their length by $|\vec{x}|$. When $|\vec{x}|=1$, we write $x$ instead of $\vec{x}$. For a tuple $\vec{y}$ and a tuple $\vec{k}\in \Z^{|\vec{y}|}$, their dot product is written as $\vec{k}\vec{y}$.

\section{Oriented Abelian Groups And Their Pairs}\label{oriented_groups_section}
Considering an oriented abelian group, the group that first comes to mind is the unit circle $\mathbb{S}$. 
Consider the group homomorphism $\phi:\mathbb{R}\rightarrow \mathbb{S}$  given by $\phi(x)=\exp(2\pi i x)$,  which has the kernel $\Z$.
From this homomorphism we can observe a connection between ordered and oriented abelian groups.
In particular, for any oriented abelian group 
$A$, the corresponding ordered abelian group $A^*$
is defined on the set $\Z\times A$, where $A^*$ is equipped with a binary operation $+$ and an order $<$, defined in a manner that makes it an ordered abelian group, without going into the full details here. 	Conversely, for an ordered abelian group $G$ with distinguished positive element $1$, one can construct an oriented abelian group $G_{\mathrm{mod }1}$. The underlying set of $G_{\mathrm{mod }1}$ is defined to be $[0,1):=\{g\in G:0\leq g<1\}$ where the orientation is induced by the order relation on $G$ and addition is defined as 
	\beq \nonumber g+_1 h: = \left\{ \begin{array}{ll}
		g+h & \textrm{ if $g+h<1$ }\\
		g+h-1 & \textrm{otherwise}
	\end{array} \right.\eeq
for $g,h\in [0,1)$. 
Additionally, the regular density of the corresponding ordered and oriented groups are equivalent. Details on how the correspondence between ordered and oriented groups is defined, as well as the relationship between the theories of ordered and oriented structures, can be found in \cite{thesis_ayhan}.

\medskip
 In the setting of ordered abelian groups, the condition that $[p]G=[p]H$ for every prime $p$ is sufficient to ensure that $G$ and $H$ are elementarily equivalent (See \cite{Robinson-Zakon}).
 For oriented abelian groups, however, this criterion is no longer adequate. Since these groups may also contain torsion elements, additional information regarding the torsion is also necessary. Therefore, we use an invariant  $e(A,p)$ for the torsion part of the oriented abelian group $A$, defined as follows.
 
\medskip
	Let $e\in \N$ and let $p$ be any prime. If $e=0$, we define $\operatorname{Tor}_{p^{0}}(A):=\{0\}$.
	For an oriented abelian group $A$, $\operatorname{Tor}_{p^{e}}(A)$ is always a subgroup of $\operatorname{Tor}_{p^{e+1}}(A)$. Indeed, we have the following chain of successive torsion subgroups
    \beq \nonumber\operatorname{Tor}_{p^{0}}(A)\leq \operatorname{Tor}_{p^{1}}(A) \leq \ldots \leq \operatorname{Tor}_{p^{e}}(A) \leq \ldots \leq \operatorname{Tor}_{p^{e+k}}(A)\leq \ldots,\eeq   
where $k\in \N$.  If this chain terminates, then there is some $e\in \N$ such that $\operatorname{Tor}_{p^{e+k}}(A)=\operatorname{Tor}_{p^{e}}(A)$ for all $k\in \N$. In this case, we put $e(A,p)=e$. Otherwise, we put $e(A,p)=\infty$.  

\medskip
The following result gives us a characterization to determine the theory of regularly dense oriented abelian groups.
	\begin{lemma}\label{8.1.9}  \cite[Corollary 8.1.9]{thesis_ayhan}
		Let $A$ and $B$ be regularly dense oriented abelian groups. Then $A$ and $B$ are elementarily equivalent if and only if for every prime $p$, we have $[p]A=[p]B$ and $e(A,p)=e(B,p)$.
	\end{lemma}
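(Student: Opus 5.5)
We would prove this by passing to ordered abelian groups. For a regularly dense oriented abelian group $A$, let $A^*$ be the associated ordered abelian group on $\Z\times A$ with distinguished positive element $1=(1,0)$; then $A\cong (A^*)_{\mathrm{mod}\,1}$. Regular density transfers between $A$ and $A^*$, so $A^*$ is a regularly dense ordered abelian group. We would then use the known fact from \cite{thesis_ayhan} that the theory of $A$ is interpretable in $(A^*,1)$, and conversely that $(A^*,1)$ is interpretable in $A$, both uniformly. From this it follows that $\operatorname{Th}(A)$ is determined by $\operatorname{Th}(A^*,1)$. By Robinson--Zakon and its refinement with a named constant, $\operatorname{Th}(A^*,1)$ is in turn determined by the invariants $[p]A^*$ together with the residue of $1$ modulo $pA^*$.

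The first key step is to translate these invariants into invariants of $A$. We would examine the exact sequence $0\to\Z\to A^*\to A\to 0$ with $\Z=\langle 1\rangle$ and apply the snake lemma to multiplication by $p^e$. This gives
\begin{equation}\nonumber
0\to \operatorname{Tor}_{p^e}(A^*)=0\to \operatorname{Tor}_{p^e}(A)\to \Z/p^e\Z\to A^*/p^eA^*\to A/p^eA\to 0.
\end{equation}
The image of $\operatorname{Tor}_{p^e}(A)$ in $\Z/p^e\Z$ is cyclic. We have $\operatorname{Tor}_{p^e}(A)\cong\Z/p^{\min(e,e(A,p))}$, since the torsion of an oriented group embeds in the circle and so each $\operatorname{Tor}_{p^e}$ is cyclic. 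Counting then yields
\begin{equation}\nonumber
[p^e]A^*=p^{\,e-\min(e,e(A,p))}\,[p^e]A .
\end{equation}
In particular, $[p]A^*$ equals $[p]A$ if $e(A,p)\ge1$ and equals $p[p]A$ if $e(A,p)=0$. The position of $1$ modulo $p^eA^*$ is likewise determined: $1\in p^eA^*$ exactly when the map $\Z/p^e\to A^*/p^e A^*$ kills $1$, which happens iff $e(A,p)\ge e$.

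The forward direction is easy. $[p]A$ is first-order expressible, by sentences stating that there are, or are not, $n$ elements pairwise incongruent mod $p$. $e(A,p)$ is also expressible, via the sentences ``$\operatorname{Tor}_{p^{e+1}}(A)=\operatorname{Tor}_{p^e}(A)$''. Hence elementarily equivalent groups share both invariants.

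For the converse, assume $[p]A=[p]B$ and $e(A,p)=e(B,p)$ for all $p$. The computation above gives $[p]A^*=[p]B^*$. It also shows that the divisibility behaviour of $1$ matches, namely $1\in p^eA^*$ iff $1\in p^eB^*$. We would then invoke the elementary classification of regularly dense ordered abelian groups with a distinguished positive element, which is Robinson--Zakon relative to the type of $1$. This gives $(A^*,1)\equiv(B^*,1)$, and the interpretation then gives $A\equiv B$.

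We expect the main obstacle to be this last classification step with the constant $1$. Robinson--Zakon is usually stated without constants, so we would need to check that the complete type of $1$ is determined by its divisibility data. This relies on regular density making $\operatorname{Th}(A^*,1)$ eliminate quantifiers in the Presburger-style language with congruence predicates. If that route proves too delicate, we would instead prove it directly by a back-and-forth between $\aleph_1$-saturated models of $A$ and $B$. This would use the cyclic torsion structure and the $[p]$ counts to extend partial isomorphisms on finitely generated pure subgroups that preserve $\mathcal{O}$ and congruences.
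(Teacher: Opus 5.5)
The paper does not prove this lemma. It quotes it from \cite{thesis_ayhan}, and its surrounding discussion (the cover $A^*$, the construction $G_{\mathrm{mod}\,1}$, Robinson--Zakon supplemented by a torsion invariant) points to exactly the reduction you propose. Your argument is correct. The snake-lemma sequence gives $[p]A^*=p^{\,1-\min(1,e(A,p))}[p]A$ and $1\in p^eA^*\iff e(A,p)\ge e$. The named constant is harmless: regularly dense ordered abelian groups with fixed $[p]$-invariants eliminate quantifiers in $\{0,+,-,<,(D_n)_{n\ge 2}\}$. Hence $\operatorname{Th}(A^*,1)$ is determined by $\operatorname{Th}(A^*)$ together with the atomic facts $k\cdot 1\in nA^*$. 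By the Chinese remainder theorem and torsion-freeness, these reduce to the conditions $1\in p^eA^*$.

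Three corrections are needed.

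\emph{The data on $1$.} You need the condition $1\in p^eA^*$ for every $e$, not just the residue of $1$ modulo $pA^*$ as in your first formulation. That residue cannot separate $e(A,p)=1$ from $e(A,p)=2$. Your later argument does use all $e$, so this is only a slip in the setup.

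\emph{The reverse interpretation.} The claimed uniform interpretation of $(A^*,1)$ in $A$ is false. Applied to an $\aleph_1$-saturated model $A$, it would yield an $\aleph_1$-saturated $(A^*,1)$. But $\Z\cdot 1$ is cofinal in $A^*$, so the finitely satisfiable type $\{x>n\cdot 1:n\in\N\}$ is omitted. You never actually use this direction; only the interpretation of $A$ on $[0,1)$ inside $(A^*,1)$ is needed.

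\emph{Degenerate groups.} The transfer of regular density to $A^*$, and the lemma as stated here, tacitly require $|A|>2$. The group $A=\{0\}$ is vacuously regularly dense, has $A^*\cong\Z$, and has the same invariants as the torsion-free divisible group $A_a$, yet $\{0\}\not\equiv A_a$.
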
 

For $i\in \N^{>0}$, let $p_i$ be the $i$-th prime. Define
	$\vec{d}=(d_i)_{i}$ and  $\vec{e}=(e_i)_{i}$,  where $d_i,e_i\in \N\cup \{\infty\}$. Let $T(\vec{d},\vec{e})$ be the $L_{\mathcal{O}}$-theory whose models are $(A,+,-,0,\mathcal{O})$
	such that 
	$A$ is a regularly dense oriented abelian group, and for every prime $p_i$, we have $[p_i] A=p_{i}^{d_i}$ and $e(A,p_i)=e_i$  .

\medskip
Note that regular density is a first-order property in the language $L_{\mathcal{O}}$.
	The following is a direct result of Lemma \ref{8.1.9}.
	\begin{corollary}\label{complete}
	For fixed $\vec{d}$ and $\vec{e}$, the theory	$T(\vec{d},\vec{e})$ is complete.
	\end{corollary}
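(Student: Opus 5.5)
To prove Corollary~\ref{complete}, I would take any two models $A,B\models T(\vec{d},\vec{e})$ and show $A\equiv B$ using Lemma~\ref{8.1.9}. I would also check that $T(\vec{d},\vec{e})$ has at least one model; otherwise completeness would hold only vacuously (or the theory would be inconsistent). Since $T(\vec{d},\vec{e})$ is presumably introduced only for parameters where models exist, this is a side remark rather than part of the argument.

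First I would verify that the defining conditions are genuinely first-order in $L_{\mathcal{O}}$, so that $T(\vec{d},\vec{e})$ is an honest $L_{\mathcal{O}}$-theory.
\begin{enumerate}
\item The axioms of oriented abelian groups are first-order.
\item Regular density is first-order, as noted in the text: for each prime $p$ it is a single sentence quantifying over $a,b,c$.
\item The condition $[p_i]A=p_i^{d_i}$ for finite $d_i$ is expressed by saying there exist $p_i^{d_i}$ elements pairwise incongruent modulo $p_i A$, and that every element is congruent to one of them. For $d_i=\infty$ one uses the infinite scheme ``there are at least $n$ pairwise incongruent elements'' for every $n$.
\item The condition $e(A,p_i)=e_i$ for finite $e_i$ says that every $x$ with $p_i^{e_i+1}x=0$ satisfies $p_i^{e_i}x=0$, and, if $e_i>0$, that some $x$ has $p_i^{e_i}x=0\neq p_i^{e_i-1}x$. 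This uses the fact that stabilization at one step propagates: if $\operatorname{Tor}_{p^{e+1}}=\operatorname{Tor}_{p^e}$, then $\operatorname{Tor}_{p^{e+k}}=\operatorname{Tor}_{p^e}$ for all $k$, by induction, since $p^{e+k+1}x=0$ implies $px\in\operatorname{Tor}_{p^{e+k}}$. For $e_i=\infty$ one uses the scheme saying that $\operatorname{Tor}_{p^{n+1}}\neq\operatorname{Tor}_{p^n}$ for all $n$.
\end{enumerate}

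Then, given $A,B\models T(\vec{d},\vec{e})$, both are regularly dense oriented abelian groups. For each prime $p=p_i$ we have $[p]A=p^{d_i}=[p]B$ (both infinite when $d_i=\infty$) and $e(A,p)=e_i=e(B,p)$. Lemma~\ref{8.1.9} then gives $A\equiv B$. Since any two models are elementarily equivalent, for every sentence $\sigma$ either all models satisfy $\sigma$ or all satisfy $\neg\sigma$. By the completeness theorem, $T(\vec{d},\vec{e})\vdash\sigma$ or $T(\vec{d},\vec{e})\vdash\neg\sigma$, so the theory is complete.

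The only delicate point is the axiomatization of the infinite cases $d_i=\infty$ and $e_i=\infty$, and the stabilization argument for $e(A,p)$. I expect both to be routine.
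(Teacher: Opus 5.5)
Your proposal is correct and follows the paper's route: the paper presents Corollary~\ref{complete} as a direct consequence of Lemma~\ref{8.1.9}, because any two models of $T(\vec{d},\vec{e})$ have the same invariants $[p]A$ and $e(A,p)$ for every prime $p$. You also check first-order axiomatizability, covering the infinite cases and the stabilization of the torsion chain, and you note that consistency is needed for completeness to be non-vacuous; these fill in details the paper leaves implicit.
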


Now we focus on divisible oriented abelian groups, since quantifier elimination can be obtained in this case.
The following result specifies the possible values of $e(A,p)$
for a divisible oriented abelian group $A$.

	\begin{lemma}\cite[Lemma 8.1.4]{thesis_ayhan}
		Let $A$ be a divisible oriented abelian group. Then, for every prime $p$, $e(A,p)$ is either $\vec{0}$ or $\vec{\infty}$.
	\end{lemma}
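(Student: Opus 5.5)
Fix a prime $p$ and a divisible oriented abelian group $A$. The plan is to show that the chain
\[
\operatorname{Tor}_{p^0}(A)\leq \operatorname{Tor}_{p^1}(A)\leq \cdots
\]
is either constantly $\{0\}$ or strictly increasing at every step. In the first case $e(A,p)=0$; in the second the chain never terminates, so $e(A,p)=\infty$. The whole argument is thus the dichotomy: either $\operatorname{Tor}_p(A)=\{0\}$, or $\operatorname{Tor}_{p^{e}}(A)\subsetneq \operatorname{Tor}_{p^{e+1}}(A)$ for every $e$.

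First suppose $\operatorname{Tor}_p(A)=\{0\}$. I show by induction on $e$ that $\operatorname{Tor}_{p^e}(A)=\{0\}$. If $p^{e+1}a=0$, then $p^e a\in \operatorname{Tor}_p(A)=\{0\}$, so $a\in \operatorname{Tor}_{p^e}(A)=\{0\}$. Hence every term of the chain is trivial, the chain terminates at $e=0$, and $e(A,p)=0$.

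Now suppose there is $a_1\neq 0$ with $pa_1=0$. Using divisibility, I build inductively elements $a_{k+1}$ with $pa_{k+1}=a_k$. Then $p^{k}a_k=p\,a_1=0$, so $a_k\in\operatorname{Tor}_{p^k}(A)$. On the other hand $p^{k-1}a_k=a_1\neq 0$, so $a_k\notin \operatorname{Tor}_{p^{k-1}}(A)$. Every inclusion in the chain is therefore strict, the chain never stabilizes, and $e(A,p)=\infty$.

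I expect no real obstacle; this is essentially the standard fact that the $p$-primary part of a divisible group is either $0$ or contains a Prüfer $p$-group. The orientation plays no role. The only point to check is that divisibility here means divisibility in the group sense (every element is divisible by every $n\ge1$), which gives the existence of the $a_{k+1}$. I would also note that the "$\vec{0}$ or $\vec{\infty}$" in the statement is read componentwise as $0$ or $\infty$ for each prime $p$.
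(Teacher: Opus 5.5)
Your proof is correct and complete. In the trivial case you only need that $p^{e}a\in\operatorname{Tor}_p(A)$ whenever $p^{e+1}a=0$. In the other case, the sequence $pa_{k+1}=a_k$ given by divisibility yields $a_k\in\operatorname{Tor}_{p^k}(A)\setminus\operatorname{Tor}_{p^{k-1}}(A)$, so every inclusion in the chain is strict. As you say, the orientation plays no role.

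The paper gives no proof of its own here; it only cites Lemma 8.1.4 of G\"unayd\i n's thesis, so there is no route to compare against.

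Your componentwise reading of ``$\vec{0}$ or $\vec{\infty}$'' is the right one, because a uniform version is false. For irrational $a$, take the variant $B=(\Z[1/2]+a\Q)_{\mathrm{mod}\,1}$ of the paper's $A_a$. It is divisible, and it is regularly dense because it is dense in $\mathbb{S}$. It has $e(B,2)=\infty$ but $e(B,3)=0$.
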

    
Here we give some examples of models of $T(\vec{0},\vec{e})$, where $\vec{e}\in \{\vec{0},\vec{\infty}\}$. In the torsion case, both the unit circle $\mathbb{S}$ and the group of complex roots of unity $\mathbb{U}$ are models of $T(\vec{0},\vec{\infty})$. For the torsion-free case, first consider the ordered abelian group $G_a=\Z+a\Q$ for some irrational $a\in \R\setminus \Q$, and let $A_a=(G_a)_{\mathrm{mod }1}$. Since $A_a$ and $\Q$ are isomorphic as groups, $A_a$ is divisible. Now we show that $A_a$ is torsion-free, so let $\overline{aq}\in G_a/\Z$ be such that $n\overline{aq}=\overline{0}$. It follows then $naq\in \Z$ and if $n\neq 0$, we get $aq\in \Q$. Since $a\in \R\setminus \Q$, we conclude that $q=0$. Hence $e(A_a,p)=0$ for all primes $p$, and therefore $A_a$ is a model of $T(\vec{0},\vec{0})$.

\medskip

 We restate the quantifier-elimination result for divisible oriented abelian groups, which will be used in the following section.
 
\begin{theorem}\label{qe_orientedgps}\cite[Lemma 6.3.2]{melissa}
For	$\vec{e}\in \{\vec{0},\vec{\infty}\}$, the $L_\mathcal{O}$-theory $T(\vec{0},\vec{e})$ has quantifier elimination.
\end{theorem}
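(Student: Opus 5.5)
The plan is a back-and-forth argument. Let $A,B$ be models of $T(\vec{0},\vec{e})$, and let $B$ be $|A|^+$-saturated. By the standard criterion, it suffices to show the following. Let $f\colon C\to B$ be an $L_{\mathcal{O}}$-embedding from a substructure $C\leq A$, and let $a\in A\setminus C$. Then $f$ extends to an embedding of the substructure generated by $C\cup\{a\}$.

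\textbf{Step 1: reduce to divisible hulls.} First I extend $f$ to the divisible hull of $C$ inside $A$. For $c\in C$ and $n\geq 1$, the solutions of $nx=c$ in $A$ form a coset of $\operatorname{Tor}_n(A)$. In the torsion-free case $\vec e=\vec 0$ this coset is a single point. In the case $\vec e=\vec\infty$ it has exactly $n$ points, which are cyclically arranged and equally spaced. Their position relative to the points of $C$ is determined by the orientation. Concretely, the $n$-th roots of $c$ lie one in each arc $\big(\tfrac{k}{n},\tfrac{k+1}{n}\big)$, obtained by lifting to the universal cover $A^*$. So one can match the corresponding root in $B$ preserving $\mathcal{O}$.

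A cleaner route passes to the associated ordered groups. Lift $C$ to $C^*\leq A^*$ and use that $A^*$ is divisible with distinguished element $1$. Embeddings of oriented groups correspond to embeddings of ordered groups fixing $1$. Divisible hulls extend uniquely in ordered groups. In the torsion case, the torsion part is $\Q\cdot 1$ modulo $\Z$, which is already determined by $1$. So we may assume $C$ is divisible and, in the torsion case, contains $\operatorname{Tor}(A)$.

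\textbf{Step 2: one-element extension.} With $C$ divisible, $a\notin C$ generates $C\oplus\Q a$. In the torsion case this holds modulo torsion already in $C$. In the torsion-free case, no multiple $na$ lies in $C$. Elements of the new structure are $c+qa$. The quantifier-free type of $a$ over $C$ is determined by the cut that each $qa$ determines in the cyclic order of $C$. Lifting to $A^*$, this is the cut of $a^*$ in the divisible ordered group $C^*$. Indeed, $qa^*$ has cut determined by that of $a^*$, since $C^*$ is divisible and $x\mapsto qx$ is order-preserving. Reducing mod $1$ then gives the orientation facts.

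By $|A|^+$-saturation of $B$, I realize the image cut of $f^*(C^*)$ by some $b^*\in B^*$. This uses the density in Step 3, which guarantees that cuts are consistent. The map $c+qa\mapsto f(c)+qb$ is then an $\mathcal{O}$-embedding.

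\textbf{Step 3: the main obstacle.} The hard part is translating faithfully between oriented and ordered structures. Mod $1$, the image cut lives in a cyclic order, and the relevant part is $a^*\in(0,1)$. Two checks are needed: that $f$ lifts to an ordered embedding $C^*\to B^*$ sending $1\mapsto 1$, and that saturation of $B$ transfers to realizing cuts in $B^*$ within $[0,1)$. Regular density, together with divisibility, makes $B$ dense without endpoints in each orientation interval. It also makes the images of $C$ non-exhaustive, which ensures the required cut is nonempty.

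I would treat $\vec e=\vec 0$ and $\vec e=\vec\infty$ separately only in Step 1. They differ only in whether the torsion roots must be matched. Step 2 is uniform.
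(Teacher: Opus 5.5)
For $\vec e=\vec\infty$ your plan is the standard one and it works. There $\operatorname{Tor}_n(A)\cong\mathbb{Z}/n$ for every $n$, so the distinguished element $1$ is divisible in $A^*$. Hence $A^*$ is a divisible ordered group containing $\mathbb{Q}\cdot 1$, and your steps (lift $f$, extend through divisible hulls, realize a cut in $[0,1)$ by saturation) are the DOAG argument transported mod $1$. The paper itself gives no proof of this statement; it only cites it.

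The gap is in Step 1, in the torsion-free case $\vec e=\vec 0$. There $A^*$ is \emph{not} divisible: if $nb^*=1$, then $b^*$ would project to a nonzero $n$-torsion element of $A$. In fact $\mathbb{Z}\cdot 1$ is pure in $A^*$. So for $c\in C$ with lift $c^*\in[0,1)$, exactly one of $c^*,c^*+1,\dots,c^*+(n-1)$ is divisible by $n$ in $A^*$. Which one it is (equivalently, in which arc $[\tfrac kn,\tfrac{k+1}n)$ the \emph{unique} $n$-th root of $c$ in $A$ lies) is a congruence-type datum that the quantifier-free $L_{\mathcal O}$-type of $C$ does not record. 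So your claim that the position of the root ``is determined by the orientation'' is false, and $f$ need not extend to the divisible hull of $C$. Saturation of $B$ cannot help, because the root is unique. The same problem reappears in Step 2, where you treat $C^*$ as divisible.

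This cannot be repaired, because for $\vec e=\vec 0$ the statement fails in $L_{\mathcal O}=\{+,-,0,\mathcal O\}$. Take the paper's own models $A_{\sqrt2}$ and $A_{\sqrt2+1}$ of $T(\vec0,\vec0)$. Let $c=\sqrt2-1\approx 0.414$, which lies in both. The substructures generated by $c$ coincide, so $C=\mathbb{Z}c$ is a common substructure. The unique half of $c$ is $\sqrt2/2\approx 0.707$ in $A_{\sqrt2}$, but $(\sqrt2+1)/2-1\approx 0.207$ in $A_{\sqrt2+1}$. Hence the formula $\exists y\,(y+y=x\wedge\mathcal{O}(0,y,x))$ is false at $c$ in the first model and true at $c$ in the second.

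Alternatively, in $A_{\sqrt2}$ this formula defines a dense and codense set. Quantifier-free definable subsets of an archimedean model, even with parameters, are traces of finite unions of arcs and points, so they cannot be dense and codense.

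So in the torsion-free case the language must first be enlarged so that such formulas become quantifier-free, for instance by adding the definable maps $x\mapsto x/n$. As stated, only the case $\vec e=\vec\infty$ can be proved, and your argument covers that case.
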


As mentioned in the Introduction, oriented minimality (\emph{$\mathcal{O}$-minimality}) can be regarded as the counterpart of o-minimality in ordered groups. An oriented abelian group $A$ is called \emph{$\mathcal{O}$-minimal} if every definable subset of $A$ is a finite union of points and orientation intervals, where ‘definable’ refers to sets definable in the structure $(A,+,\mathcal{O})$, possibly with parameters. It is well-known that a densely ordered abelian group $(G,+,<)$ is o-minimal if and only if it is divisible (See \cite{PS}); similarly, by \cite{D-C} an oriented abelian group $A$ is $\mathcal{O}$-minimal exactly when $A\models T(\vec{0},\vec{\infty})$. Moreover, as in the ordered case, it is shown in \cite{melissa} that $\mathcal{O}$-minimality implies NIP.

\medskip
Having established that the theory $T(\vec{0},\vec{\infty})$ has NIP, we note that, in the following subsection, similar properties are also valid for pairs of such models, which serves as a motivating role for our VC-density calculations.

\subsection{Pairs of Divisible Oriented Abelian Groups}\label{The Theory of Pairs of Oriented Abelian Groups}
Recall that  we denote the language of pairs of oriented abelian groups by $L_{\mathcal{O}}^p=L_{\mathcal{O}}\cup \{V\}$, where $V$ is a unary predicate interpreted as the dense subgroup. 

\medskip
Let $T\big((\vec{d_1},\vec{e_1}),(\vec{d_2},\vec{e_2})\big)$ be the $L_{\mathcal{O}}^p$-theory whose models are pairs $(A,G)$ such that $A\models T(\vec{d_1},\vec{e_1})$, $G\models T(\vec{d_2},\vec{e_2})$ where $G$ is a dense subgroup of $A$, and $A/G$ is infinite. 

\medskip
A special $L_{\mathcal{O}}^p$-formula in  $\vec{x} = (x_1,\dots,x_m)$ has the form 
$$\exists \vec{y} \big(V(\vec{y}) \wedge \theta_V(\vec{y})\wedge \phi(\vec{x},\vec{y})\big)$$ 
where $\vec{y} = (y_1,\dots,y_n)$ is a tuple of distinct variables, $\theta_V(\vec{y})$ is the $V$-restriction of a $L_{\mathcal{O}}$-formula $\theta(\vec{y})$, and $\phi(\vec{x},\vec{y})$ is an $L_{\mathcal{O}}$-formula.
 In \cite{melissa} it is shown that the theories of divisible pairs are complete, using the fact that every formula in these theories is equivalent to a special $L_{\mathcal{O}}^p$-formula, together with the elimination results of the quantifiers for theories $T(\vec{0},\vec{\infty})$ and $T(\vec{0},\vec{0})$.
Now, we will show that the divisibility assumption on the structures forming pairs ensures quantifier elimination.

\begin{proposition}\cite[Corollary 7.1.4]{melissa}\label{fact}
  Let $(A',G')\subseteq (A,G)$ be such that $A'$ is a regularly dense pure subgroup of $A$ and $G'$ is dense in $A'$. Suppose that $A'/G'$ is infinite and for every prime $p$, $[p]A=[p]A', \ [p]G=[p]G', \ \operatorname{Tor}(A)=\operatorname{Tor}(A'), \ 
		\operatorname{Tor}(G)=\operatorname{Tor}(G')$.
	 Then we have $(A',G')\preceq (A,G)$.  
\end{proposition}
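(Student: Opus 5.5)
The plan is to run a back-and-forth argument in the style of van den Dries' treatment of dense pairs. We pass to $\kappa$-saturated elementary extensions $(A_1,G_1)\succeq(A',G')$ and $(A_2,G_2)\succeq(A,G)$. Then we show that the family of partial isomorphisms between suitable small substructures, which contains the inclusion $(A',G')\hookrightarrow(A,G)$, has the back-and-forth property. This gives $(A',G')\equiv_{A'}(A,G)$, that is, $(A',G')\preceq(A,G)$.

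First, the reduct statements. By Lemma \ref{8.1.9} and Corollary \ref{complete}, $A'$ and $A$ have the same invariants $[p]$ and $e(\cdot,p)$, so $A'\equiv A$; likewise $G'\equiv G$. To upgrade this to $A'\preceq A$ and $G'\preceq G$, we use the relative quantifier elimination for regularly dense oriented groups from \cite{thesis_ayhan}. By that result, a pure subgroup with the same $[p]$-invariants and the same torsion is elementary. Purity of $A'$ in $A$ is assumed, and the equality of torsion subgroups pins down the torsion part. For $G'\subseteq G$, note that $G'=A'\cap G$ because we have an $L^p_{\mathcal O}$-substructure. Purity of $G'$ in $G$ then follows from purity of $A'$ in $A$ together with $\operatorname{Tor}(G)=\operatorname{Tor}(G')$.

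Next we fix the back-and-forth system. Its objects are isomorphisms $f\colon(B_1,B_1\cap G_1)\to(B_2,B_2\cap G_2)$ between small subgroups with the following properties:
\begin{enumerate}
\item $B_i$ is pure in $A_i$ and $B_i\cap G_i$ is pure in $G_i$;
\item $B_i$ contains the torsion of $A_i$, and $f$ preserves $\mathcal O$;
\item $B_i$ is \emph{$G$-independent}, in the sense that $B_i+G_i$ is pure in $A_i$, and $f$ induces an orientation-respecting correspondence of $B_i/(B_i\cap G_i)$ modulo torsion.
\end{enumerate}
The starting pair $(A',A)$ belongs to the system by the hypotheses together with the previous step.

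There are two extension steps. In the first, we add an element $g\in G_1$. Using $G'\preceq G$ and saturation of $G_2$, we realize the $L_{\mathcal O}$-type of $g$ over $B_2\cap G_2$ inside $G_2$. We also have to ensure that the image respects the cut of $g$ over all of $B_2$, not merely over $B_2\cap G_2$. For this we use that $G$ is dense, so orientation cuts over $B$ are already determined by cuts over elements of $G$ modulo $B$. In the second step, we add an element $a\in A_1$ with $a\notin B_1+G_1$. Here we must find $b\in A_2$ realizing the image of the $\mathcal O$-cut of $a$ over $B_2$ and lying outside $B_2+G_2$. Finally, we close under division (pure hull) and check that purity and $G$-independence are preserved. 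Divisibility of the torsion behaves correctly because $\operatorname{Tor}(A)=\operatorname{Tor}(A')$ and $\operatorname{Tor}(G)=\operatorname{Tor}(G')$.

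The main obstacle is the second extension step. We need to realize an orientation cut over $B_2$ while avoiding the coset union $B_2+G_2$, and we need to handle the elements $na$ coherently, with $na\in B_1+G_1$ possibly holding for some $n$. I would first treat the case where $na\notin B_1+G_1$ for all $n\neq0$. In that case, the fact that $A/G$ is infinite and $G$ is dense means every orientation interval meets infinitely many cosets of $G$. Saturation then yields $b$ in the required cut outside any small union of cosets of $G_2$. The remaining case, $na\in B_1+G_1$, is handled by first adding the relevant $G$-element and then taking an $n$-th root. This is where regular density and the condition $[p]G=[p]G'$ are used: they guarantee that the $n$-th roots available in $G_2$ match those in $G_1$.
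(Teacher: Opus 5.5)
The paper gives no proof of this proposition. It imports it from \cite{melissa} and applies it only to divisible pairs. Your van den Dries--style back-and-forth is a reasonable plan, and your second extension step is essentially right: every arc meets every coset of the dense subgroup $G_2$, and $A_2/G_2$ is infinite. The genuine gap is the first extension step, which carries the real content. Realizing the type of $g$ over $B_2\cap G_2$ inside $G_2$ does not fix where the new element sits relative to $B_2\setminus G_2$. If no element of $B_1\cap G_1$ separates $g$ from some $b\in B_1\setminus G_1$, nothing over $B_1\cap G_1$ says on which side of $b$ the element $g$ lies. So the claim that ``orientation cuts over $B$ are already determined by cuts over elements of $G$ modulo $B$'' is false. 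You must realize, by a \emph{single} element of $G_2$, the image of the full type of $g$ over $B_1$ together with $V(x)$, and you must prove this is consistent. When $A$ is torsion and $G$ is torsion-free, this type must also say which $n$-th root of each $b+kg$ ($b\in B_1\cap G_1$) lies in $V$. The $L_{\mathcal O}$-type over $B_1$ does not decide this: in $(\R/\Z,(\Z+t\Q)/\Z)$, arbitrarily close elements of $G$ have their $G$-halves in opposite half-circles. Without it, the extension of $f$ to the divisible hull of $B_1+\Z g$ need not respect $V$. This information is in the $G$-internal type you mention, but the two types must be realized together. When $A$ is torsion-free, one-types are also not cuts. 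In $(\Z+a\Q)_{\mathrm{mod}\,1}$ the formula $\exists y\,(2y=x\wedge\mathcal O(0,y,x))$ defines a dense and codense set. So the density you need is that $G_2$ meets every nonempty set $\{x:(x+b)/N\in J\}$ with $J$ an arc. This holds because $G_2$ is dense and divisible. The same correction applies to the ``$\mathcal O$-cut'' in your second step.

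Second, the starting map is not in your system ``by the hypotheses''. Condition~(3) for $A'$, that $A'+G_2$ is pure in $A_2$, is exactly where $[p]G=[p]G'$ must be used, and it follows only for finite indices. Then $G'/nG'\to G/nG$ is onto, so $a'+h\in nA$ with $h\in G$ gives $h'\in G'$ with $a'+h'\in nA\cap A'=nA'$. With infinite indices the proposition as transcribed is false. Let $R=\Z_{(2)}$ and pick reals with $1,t_0,t_1,\dots$ linearly independent over $\Q$. In $\R/\Z$ put $A=\bigoplus_{i\ge0}R\bar t_i$ and $G=\bigoplus_{i\ \mathrm{even}}R\bar t_i$. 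Put $A'=R\bar u\oplus\bigoplus_{i\ge2}R\bar t_i$ with $u=t_0+2t_1$, and $G'=A'\cap G$. All hypotheses hold: $A=A'\oplus R\bar t_1$, every $[2]$ is $\aleph_0$, and there is no torsion. Yet $\exists y\,\exists z\,(V(y)\wedge\bar u+y=2z)$ holds in $(A,G)$ via $y=-\bar t_0$, $z=\bar t_1$. It fails in $(A',G')$, since the $\bar u$-coordinate of $\bar u+y$ is $1\notin 2R$. In the divisible case the paper needs, all these purity conditions are vacuous. Under your (2) and (3), the case $na\in B_1+G_1$ with $a\notin B_1+G_1$ cannot occur, so regular density and $[p]G=[p]G'$ are not used there. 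Purity of $G'$ in $G$ needs $\operatorname{Tor}(A)=\operatorname{Tor}(A')$, not $\operatorname{Tor}(G)=\operatorname{Tor}(G')$. Finally, a quotient by a dense subgroup carries no orientation, so replace the last clause of (3) by $f(B_1\cap G_1)=B_2\cap G_2$.
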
  

Note that, by the above proposition, all substructures we will consider in the forthcoming proofs are pure, due to the divisibility assumptions.

\begin{theorem}\label{qe_pairs_1}
    The $L_{\mathcal{O}}^p$-theory $T\big((\vec{0},\vec{0}),(\vec{0},\vec{0})\big)$ admits quantifier elimination.
\end{theorem}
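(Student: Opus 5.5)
We prove quantifier elimination with the standard back-and-forth (substructure) test. Let $(A,G)$ and $(B,H)$ be models of $T\big((\vec{0},\vec{0}),(\vec{0},\vec{0})\big)$, with $(B,H)$ $|A|^+$-saturated. Let $(C,C\cap V)$ be a common $L_{\mathcal{O}}^p$-substructure, i.e.\ a subgroup $C\le A$ with the induced orientation and $V$-part $C_G:=C\cap G$, embedded in $(B,H)$ by $f$ so that $f(C\cap G)=f(C)\cap H$. It suffices to extend $f$ to an embedding of $(A,G)$ into $(B,H)$; equivalently, for each $a\in A\setminus C$, extend $f$ to a substructure containing $C$ and $a$.

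\textbf{Step 1: divisible hulls.} Since $A,G,B,H$ are divisible and torsion-free, the divisible hull $\Q C$ sits inside $A$. Moreover $\Q C\cap G=\Q(C\cap G)$: if $nc'=c\in G$ with $n\neq0$, then $c'\in G$ by divisibility and torsion-freeness of $G$ (the unique $n$-th root of $c$ in $A$ already lies in $G$). The orientation on $\Q C$ is determined by that on $C$, because $\mathcal{O}(0,x,y)$ for $x=c/n$, $y=c'/n$ is controlled by the cyclic order of $c,c'$ together with the winding data, and regular density is exactly what makes this determination uniform. So $f$ extends uniquely to $\Q C$ and preserves $V$. We may therefore assume $C$ and $C\cap G$ are divisible.

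\textbf{Step 2: adding an element $a$.} There are two cases.

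\emph{Case (i): $a\in G$, or more generally $a\in G+C$.} Write $a=g+c$; it suffices to add $g\in G\setminus C$. The set $C+\Q g$ must then have $V$-part $(C\cap G)+\Q g$, which uses that $(\Q g+C)\cap G=\Q g+(C\cap G)$; this holds since $G$ is a group. Next, realize the $L_{\mathcal{O}}$-cut of $g$ over $C$ by an element of $H$. The cut is a quantifier-free $L_{\mathcal{O}}$-type by Theorem~\ref{qe_orientedgps}. It is realized in $H$ because $H$ is dense in $B$, every orientation interval over $f(C)$ meets $H$, and we use saturation. We must also guarantee $h\notin f(C)+\ldots$, i.e.\ avoid the finitely many $\Q$-combinations landing in $f(C)$; density and infinitude take care of this.

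\emph{Case (ii): $a\notin G+C$.} Here we need $b\in B$ realizing the $L_{\mathcal{O}}$-cut of $a$ over $C$ while avoiding $H+f(C)$, so that $(C+\Q a)\cap G=C\cap G$ is preserved: for $q\neq0$ and $c\in C$, $qa+c\notin G$. Each condition $qb+f(c)\notin H$ is implied by $b\notin H+f(\Q C)$. Note that $H+f(C)$ has infinite index in $B$ because $B/H$ is infinite. Using saturation, we need every orientation interval with endpoints in $f(C)$ to contain points outside any union of fewer than $|A|^+$ cosets of $H$. Finitely many cosets cannot cover an interval, since cosets of a dense subgroup of infinite index in an interval are infinitely many and each is codense. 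Compactness then gives the realization.

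Finally, the ordering and orientation on $C+\Q a$ are determined by the cut, again by Theorem~\ref{qe_orientedgps} (quantifier elimination for $T(\vec{0},\vec{0})$), so the extension is an $L_{\mathcal{O}}^p$-embedding. Iterating transfinitely completes the proof. Proposition~\ref{fact} can alternatively be used to see that the substructures involved sit elementarily once they are models, since all the relevant invariants are trivial.

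The main obstacle I expect is Case (ii): showing rigorously that the type ``in the cut and outside every coset $H+f(c)$'' is finitely satisfiable, and handling the orientation on $\Q$-combinations carefully. This is where density of $H$ and infinitude of $B/H$ are essential.
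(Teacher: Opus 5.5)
\textbf{The gap.} The step that fails is your Step~1 claim that the orientation on $\Q C$ is determined by the orientation on $C$. The closing claim of Case~(ii), that the orientation on $C+\Q a$ is determined by the cut of $a$ over $C$, fails for the same reason. In a torsion-free divisible oriented group each $c$ has a unique $n$-th root. But which of the $n$ arcs that root lies in is not quantifier-free information about $c$, and regular density does not fix it.

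\textbf{Counterexample.} For $\Z\le\Lambda\le\R$, let $\Lambda_{\mathrm{mod}\,1}$ be $\Lambda\cap[0,1)$ with addition mod $1$ and the cyclic orientation. Put $A_1=(\Z+\Q(\sqrt{2}-1)+\Q\sqrt{3})_{\mathrm{mod}\,1}$, $A_2=(\Z+\Q\sqrt{2}+\Q\sqrt{3})_{\mathrm{mod}\,1}$ and $G_1=G_2=(\Z+\Q\sqrt{3})_{\mathrm{mod}\,1}$. Since $1,\sqrt{2},\sqrt{3}$ are $\Q$-linearly independent, each $(A_i,G_i)$ is a model of $T\big((\vec{0},\vec{0}),(\vec{0},\vec{0})\big)$: the groups are divisible, torsion-free and regularly dense, $G_i$ is dense, and $A_i/G_i\cong\Q$.

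The real number $c=\sqrt{2}-1$ lies in both $A_i$. Each $kc$ is the same point of $[0,1)$ in both, and $kc\in G_i$ iff $k=0$. So $\Z c$ with trivial $V$-part is a common substructure, and $c$ has the same quantifier-free $L_{\mathcal{O}}^p$-type in both.

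However, the unique $y$ with $2y=c$ is $\frac{\sqrt{2}-1}{2}\approx 0.21$ in $A_1$ and $\frac{\sqrt{2}}{2}\approx 0.71$ in $A_2$. The two candidates differ by $\frac12$, which lies in neither group. Hence $\mathcal{O}(0,c/2,c)$ holds in $A_1$ and fails in $A_2$. So the identity on $\Z c$ does not extend to an embedding of $\Q c$, even into an $|A_1|^+$-saturated elementary extension of $(A_2,G_2)$, which is exactly what your Step~1 asserts.

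\textbf{Consequence.} This cannot be patched. The example shows that $\exists y\,(y+y=x\wedge\mathcal{O}(0,y,x))$ is not equivalent modulo the theory to any quantifier-free $L_{\mathcal{O}}^p$-formula. So the theory does not eliminate quantifiers in this language. Dropping $V$, the same example refutes the $\vec{e}=\vec{0}$ case of Theorem~\ref{qe_orientedgps}, which you invoke in Case~(ii).

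\textbf{Comparison with the paper.} The paper takes a different route from yours. It passes to the divisible hull $(C',D')$, embeds it in both models, and calls the embeddings elementary by Proposition~\ref{fact}. But it relies on the same unjustified point, namely that $(C',D')$ embeds in both models over $C$. It also applies Proposition~\ref{fact} outside its hypotheses: for $C=\Z c$ and $D=0$ one gets $D'=0$, which is not dense in $C'$.

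Any correct version needs a richer language, at least one recording where the definable maps $x\mapsto\frac{1}{n}x$ land. Even then, your Case~(i) would need more than density of $H$, since here $x\mapsto\frac12 x$ is discontinuous at every point.
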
  

\begin{proof}
  Let $(A,G)$ and $(B,H)$ be models of $T\big((\vec{0},\vec{0}),(\vec{0},\vec{0})\big)$, and $(C,D)$ be a common substructure. Suppose that $(A,G)\models \phi(a;\vec{c})$ for some quantifier-free formula $\phi(x;\vec{y})$ where $\vec{c}\in C^{|\vec{y}|}$. We aim to show that there exists $b\in B$ such that $(B,H)\models \phi(b;\vec{c})$. 

  \medskip
 Let $(C',D')$ denote the divisible hull of $(C,D)$. Then there exist embeddings $f_1$ and $f_2$ of $(C',D')$ into $(A,G)$ and $(B,H)$, respectively.
   By Proposition \ref{fact}, these embeddings are elementary. Since  $(C',D')\models \exists x \phi(x;\vec{c})$ and $f_2$ is an elementary embedding, $(B,H)\models \exists x \phi(x;\vec{c})$. Hence there exists $b\in B$ such that $(B,H)\models \phi(b;\vec{c})$. Therefore, quantifier-free types over the common substructures are preserved, completing the proof.
     \end{proof}

In order to deal with torsion groups, we add both the divisible hulls and the torsions to obtain pure extensions, which allows us to establish quantifier elimination results in the remaining theories stated in Theorem \ref{main_thm_qe}.

\begin{theorem}\label{qe_pairs_2}
    The $L_{\mathcal{O}}^p$-theory $T\big((\vec{0},\vec{\infty}),(\vec{0},\vec{\infty})\big)$ admits quantifier elimination.
\end{theorem}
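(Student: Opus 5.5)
I will follow the same pattern as the proof of Theorem \ref{qe_pairs_1}, using the standard embedding test for quantifier elimination. Let $(A,G)$ and $(B,H)$ be models of $T\big((\vec{0},\vec{\infty}),(\vec{0},\vec{\infty})\big)$ with a common substructure $(C,D)$. Let $\phi(x;\vec{y})$ be quantifier-free and $\vec{c}\in C^{|\vec{y}|}$ with $(A,G)\models \phi(a;\vec{c})$. It suffices to produce an intermediate structure $(C'',D'')$ that contains $(C,D)$, embeds into both $(A,G)$ and $(B,H)$ over $(C,D)$, and whose embeddings are elementary by Proposition \ref{fact}. Then $(C'',D'')\models\exists x\,\phi(x;\vec{c})$, since this holds in $(A,G)$ and the embedding into $(A,G)$ is elementary. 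Transferring along the elementary embedding into $(B,H)$ gives some $b\in B$ with $(B,H)\models\phi(b;\vec{c})$.

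\textbf{Construction of $(C'',D'')$.} First take divisible hulls, giving $(C',D')$. In the torsion setting this step needs care, because divisible hulls are only unique up to isomorphism over the base when the base is torsion-free or when the torsion is controlled. Next adjoin the full torsion: set $C''=C'+\operatorname{Tor}(A)$ and $D''=D'+\operatorname{Tor}(G)$.

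The key observations concern the torsion groups.
\begin{enumerate}
\item In an oriented abelian group, $\operatorname{Tor}_n$ is cyclic of order dividing $n$. It is determined inside the orientation as the unique set of $n$ points $0=t_0<_0t_1<_0\dots$ with $t_{i}=it_1$.
\item Since $e=\infty$ for every prime, $\operatorname{Tor}(A)\cong\operatorname{Tor}(B)\cong\Q/\Z$.
\item The same holds for $G$ and $H$. Since $G\le A$ and both contain all roots of unity of each order, $\operatorname{Tor}(G)=\operatorname{Tor}(A)$.
\end{enumerate}
I would then show that the map on torsion is canonical: the generator of $\operatorname{Tor}_n(A)$ is the $<_0$-least nonzero $n$-torsion element. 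This gives a unique $L_{\mathcal O}$-isomorphism $\operatorname{Tor}(A)\to\operatorname{Tor}(B)$, and it preserves $V$ because all torsion lies in $G$ and in $H$. This isomorphism must agree with the identity on $\operatorname{Tor}(C)$, which holds because it is the unique orientation-preserving group isomorphism.

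\textbf{Extending the embedding.} I then extend the embedding to $C'+\operatorname{Tor}(A)$. Since $C'/\operatorname{Tor}(C')$ is torsion-free, the group can be presented as a pushout, and the orientation on the sum is determined: for $c+t$, the orientation relations reduce to relations computed in the ordered cover $A^*$. Orientation on $C''$ is the restriction of the orientation on $A$, so the question is whether the images in $A$ and in $B$ induce the same orientation. I expect this to be the main obstacle. I would handle it by working in the associated ordered groups $A^*$ and $B^*$. There, adding torsion corresponds to adjoining the rationals $\tfrac{k}{n}\cdot 1$ to the divisible hull, and the orientation of $c+t$ is governed by the position of a divisible-hull element relative to rational multiples of $1$. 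That position is fixed, because $n(c+t)=nc\in C$ and the orientation data of $nc$ is already fixed by $C$. The regular density of $C$ pins down which of the $n$ possible $n$-th roots is intended.

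\textbf{Applying Proposition \ref{fact}.} Finally I verify its hypotheses for $(C'',D'')\subseteq(A,G)$:
\begin{enumerate}
\item $C''$ is divisible, hence pure and regularly dense.
\item $[p]C''=[p]A=1$ and $[p]D''=[p]G=1$.
\item $\operatorname{Tor}(C'')=\operatorname{Tor}(A)$ and $\operatorname{Tor}(D'')=\operatorname{Tor}(G)$.
\end{enumerate}
The remaining conditions, density of $D''$ in $C''$ and $C''/D''$ infinite, may fail for an arbitrary $(C,D)$. For example, $D$ could be trivial, or $C/D$ could be finite. If so, I would first enlarge $(C,D)$ by elements realizing the same quantifier-free type on both sides. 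Concretely, I would add torsion, which is dense since $\Q/\Z$ is dense in the circle and so makes $D''$ dense, together with an element of $A\setminus G$ that is generic over $C$. The generic element has a counterpart in $B$ by saturation, and this makes the quotient infinite.
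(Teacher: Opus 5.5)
Your construction matches the paper's: close $(C,D)$ under roots, add all torsion, match $\operatorname{Tor}(A)$ with $\operatorname{Tor}(B)$ via the $<_0$-least nonzero $n$-torsion elements, and use Proposition~\ref{fact} to make both embeddings elementary. That part is fine apart from two small points. First, $C$ is an arbitrary subgroup and need not be regularly dense; what singles out the intended root is the quantifier-free no-wrap-around condition $ia<_0(i+1)a$ for $1\le i<n$, which picks the principal $n$-th root in both $A$ and $B$. Second, you must check that $V$ is preserved away from the torsion. This holds because an $a\in C''$ with $na\in C$ lies in $G$ iff $na\in D$, since $G$ is divisible and contains $\operatorname{Tor}(A)$; the same holds in $B$.

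The genuine gap is your last step. You rightly notice, where the paper's proof does not, that $(C'',D'')$ need not have $D''$ dense in $C''$ or $C''/D''$ infinite. In that case it is not a model, and Proposition~\ref{fact} does not apply. But your repair for density fails. $C''$ already contains $\operatorname{Tor}(A)$, so adding torsion changes nothing, and torsion is dense only when there are no infinitesimals. In an $\aleph_1$-saturated model, which the embedding test has to handle, choose $\epsilon\notin G$ with $0<_0\epsilon<_0 t$ for every nonzero torsion $t$, and let $C=\Z\epsilon$. Then $C''=\Q\epsilon+\operatorname{Tor}(A)$ and $D''=\operatorname{Tor}(G)$, and $(0,\epsilon)_{\mathcal O}$ contains $\epsilon/2\in C''$ but no point of $D''$.

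Making $D''$ dense would require adjoining elements of $G$ in prescribed cuts of $C''$ and matching them with elements of $H$. That is exactly the one-point extension problem the appeal to Proposition~\ref{fact} was meant to bypass, and this analysis is the missing idea. Take $C''$ radically closed and $(B,H)$ $|C|^+$-saturated, and let $a\in A\setminus C''$. There are two cases.

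\emph{Case 1:} $ka+c\in G$ for some $k\neq0$ and $c\in C''$. Adjoin $g=ka+c$ first; then $a$ lies in the radical closure. The quantifier-free type of $g$ over $C''$ is determined by its cut in $C''$ together with $V(kg+c)\leftrightarrow c\in D''$. To see that the cut determines the $L_{\mathcal O}$-part, pass to the ordered cover, where $C''$ lifts to a divisible ordered group. This type is realized in $B$ by density of $H$ and saturation.

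\emph{Case 2:} no such $ka+c$ lies in $G$. Then the type of $a$ is its cut together with all $\neg V(ka+c)$ for $k\neq 0$. It is realized in $B$ by density of each coset of $H$, the fact that $B/H$ is infinite, and saturation.

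With this in hand, the usual one-variable test gives quantifier elimination directly.
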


\begin{proof}
    Let $(A,G)$ and $(B,H)$ be models of $T\big((\vec{0},\vec{\infty}),(\vec{0},\vec{\infty})\big)$ with a common substructure $(C,D)$. Let $\phi(x;\vec{y})$ be a quantifier-free formula such that $(A,G)\models \phi(a;\vec{c})$ with parameters $\vec{c}\in C^{|\vec{y}|}$ and $a\in A$.

\medskip
Let $(C',D')$ be the divisible hull of $(C,D)$, then there exist elementary embeddings 
 \begin{align*}
      f_1:\big(C'+\operatorname{Tor}(A),D'+\operatorname{Tor}(G)\big)\longrightarrow (A,G),\\ 
      f_2:\big(C'+\operatorname{Tor}(B),D'+\operatorname{Tor}(H)\big)\longrightarrow (B,H),
 \end{align*}   
where $C'+\operatorname{Tor}(A)$ denotes the smallest group containing $C'$ and $\operatorname{Tor}(A)$.
 
 \medskip
If $a\in \operatorname{Tor}(A)$, then $a\in \operatorname{Tor}_{p^k}(A)$ for some prime $p$ and $k\in \mathbb{Z}$. Note that $G\preceq A$ gives $\operatorname{Tor}(G)=\operatorname{Tor}(A)$, and therefore $a\in \operatorname{Tor}(G)$ as well. Since $|\operatorname{Tor}_{p^k}(A)| = p^k$, we may enumerate its elements as $a_1<_0 \cdots <_0 a_{p^k}$ so that $a = a_j$ for some $j \in \{1,\dots,p^k\}$.
 We also enumerate the elements of $\operatorname{Tor}_{p^k}(B)$ as $b_1<_0\cdots <_0 b_{p^k}$. We then associate $a=a_j$ with $b_j$, thereby defining a  bijection between $\operatorname{Tor}_{p^k}(A)$ and $\operatorname{Tor}_{p^k}(B)$ that preserves the order $<_0$. Repeating this process for all $p^k$-components produces a canonical correspondence between $\operatorname{Tor}(A)$ and $\operatorname{Tor}(B)$.
  Extending this correspondence naturally, we obtain an isomorphism 
$$f: \big(C'+\operatorname{Tor}(A),D'+\operatorname{Tor}(G)\big)\longrightarrow \big(C'+\operatorname{Tor}(B),D'+\operatorname{Tor}(H)\big)$$
whose restriction to $(C',D')$ is the identity. Since $(A,G)\models \phi(a;\vec{c})$ and $f_1$ is elementary, it follows that 
$$\big(C'+\operatorname{Tor}(A),D'+\operatorname{Tor}(G)\big)\models \exists x \phi(x;\vec{c}).$$
Applying the isomorphism $f$, we obtain $$\big(C'+\operatorname{Tor}(B),D'+\operatorname{Tor}(H)\big)\models \exists x \phi(x;\vec{c}).$$
As $f_2$ is also elementary, we conclude that $(B,H)\models \exists x \phi(x;\vec{c})$. Thus, 
there exists $b\in B$ such that $(B,H)\models \phi(b;\vec{c})$. This establishes that quantifier-free types over common substructures are preserved, which completes the proof.
\end{proof}

\begin{theorem}\label{qe_pairs_3}  
The $L_{\mathcal{O}}^p$-theory $T\big((\vec{0},\vec{\infty}),(\vec{0},\vec{0})\big)$ admits quantifier elimination.
\end{theorem}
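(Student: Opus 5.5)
We follow the same scheme as in the proofs of Theorem \ref{qe_pairs_1} and Theorem \ref{qe_pairs_2}. Let $(A,G)$ and $(B,H)$ be models of $T\big((\vec{0},\vec{\infty}),(\vec{0},\vec{0})\big)$ with a common substructure $(C,D)$. Let $\phi(x;\vec{y})$ be quantifier-free with $(A,G)\models\phi(a;\vec{c})$ for some $\vec{c}\in C^{|\vec{y}|}$. We aim to find $b\in B$ with $(B,H)\models\phi(b;\vec{c})$; by the standard criterion this gives quantifier elimination.

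First we build pure, elementary substructures on both sides. Let $(C',D')$ be the divisible hull of $(C,D)$, computed inside $(A,G)$ and $(B,H)$ respectively. Here $G$ is torsion-free, so $D'$ is torsion-free, and $\operatorname{Tor}(G)=\operatorname{Tor}(H)=0$. Since $A$ is torsion, we adjoin only the torsion of the big group and consider
$$\big(C'+\operatorname{Tor}(A),D'\big)\subseteq (A,G),\qquad \big(C'+\operatorname{Tor}(B),D'\big)\subseteq (B,H).$$
We check the hypotheses of Proposition \ref{fact}.
\begin{enumerate}
\item $C'+\operatorname{Tor}(A)$ is divisible, hence pure in $A$ and has $[p]=1=p^0$. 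It contains all torsion of $A$.
\item $D'$ is divisible and torsion-free, matching $[p]G=1$ and $\operatorname{Tor}(G)=0$.
\item $D'$ is dense in $C'+\operatorname{Tor}(A)$, and the quotient $(C'+\operatorname{Tor}(A))/D'$ is infinite.
\item $C'+\operatorname{Tor}(A)$ is regularly dense.
\end{enumerate}
Granting these, Proposition \ref{fact} makes both inclusions elementary.

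Second, we construct an isomorphism over $(C',D')$. As in the proof of Theorem \ref{qe_pairs_2}, we match $\operatorname{Tor}_{p^k}(A)$ with $\operatorname{Tor}_{p^k}(B)$ by listing both in $<_0$-order; each has $p^k$ elements because $e=\infty$ and the group is oriented. This gives a bijection $\operatorname{Tor}(A)\to\operatorname{Tor}(B)$. We then extend it by the identity on $C'$ to a map
$$f:\big(C'+\operatorname{Tor}(A),D'\big)\to\big(C'+\operatorname{Tor}(B),D'\big).$$
The map must be a well-defined group isomorphism preserving $\mathcal{O}$ and $V$.
\begin{itemize}
\item \emph{Well-definedness.} We need the torsion matching to agree on $C'\cap\operatorname{Tor}(A)$. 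The torsion elements of $C'$ come from $C$, which is common to both sides. The unique $<_0$-order-preserving correspondence between $p^k$-torsion subgroups is the unique orientation-preserving group isomorphism: both subgroups are cyclic of order $p^k$ and are the "$p^k$-th roots of unity" in their orientation. Hence the matching fixes elements of $C$.
\item \emph{Preservation of $V$.} This is immediate, since $V$ is $D'$ on both sides and $f$ is the identity there.
\item \emph{Preservation of $\mathcal{O}$.} This is the main obstacle. We must show that the relative position of $c+t$ for $c\in C'$ and $t$ torsion is determined by the quantifier-free type of $C'$ together with the torsion.
\end{itemize}

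For the orientation step we first try the following. By Theorem \ref{qe_orientedgps}, $A$ and $B$ both model the complete theory $T(\vec{0},\vec{\infty})$, which has quantifier elimination. Hence the $L_{\mathcal{O}}$-type of $\vec{c}$ over $\emptyset$ is the same in $A$ and $B$. Each torsion element is $\emptyset$-definable, as the $j$-th element of $\operatorname{Tor}_{p^k}$ in $<_0$-order. So $\mathcal{O}$-relations among elements of the form $\frac{1}{n}(\vec{k}\vec{c})+t$ are decided by formulas over $\vec{c}$ in $T(\vec{0},\vec{\infty})$, and therefore agree in $A$ and $B$. The division by $n$ is handled because $\frac1n x$ is determined up to torsion, and that torsion is again $\emptyset$-definable.

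With $f$ in hand, we finish exactly as in Theorem \ref{qe_pairs_2}. From $(A,G)\models\phi(a;\vec{c})$ and elementarity we get that $\big(C'+\operatorname{Tor}(A),D'\big)\models\exists x\,\phi(x;\vec{c})$. Transporting along $f$ gives $\big(C'+\operatorname{Tor}(B),D'\big)\models\exists x\,\phi(x;\vec{c})$. Elementarity on the $B$ side then yields $b\in B$ with $(B,H)\models\phi(b;\vec{c})$.
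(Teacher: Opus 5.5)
Your outline is the paper's own argument: rerun the proof of Theorem~\ref{qe_pairs_2}, adding torsion only to the big group. However, the step you call immediate, preservation of $V$, is exactly where it breaks, and the paper's ``extending this correspondence naturally'' hides the same hole. Since $C'$ and $D'$ are computed separately in $(A,G)$ and $(B,H)$ (write $C'_A,D'_A$ and $C'_B,D'_B$), ``$f$ is the identity on $C'$'' has no content. What you need is an $L_{\mathcal O}$-isomorphism $C'_A+\operatorname{Tor}(A)\to C'_B+\operatorname{Tor}(B)$ over $C$, and such a map is forced. For $c\in C$, all $n$ solutions of $nx=c$ lie in the domain, and a map fixing $0$ and $c$ and preserving $\mathcal O$ must send the $j$-th solution in $<_0$-order to the $j$-th one. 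That is precisely the map your definability argument produces. But $D'_A$ contains, for $d\in D$, the unique solution of $nx=d$ lying in $G$, and which of the $n$ solutions this is depends on $G$, not on the quantifier-free type of $C$ in the pair. So in general $f(D'_A)\neq D'_B$, and then no $L^p_{\mathcal O}$-isomorphism over $C$ exists at all. In Theorem~\ref{qe_pairs_2} this issue is invisible because there $V$ contains $\operatorname{Tor}(A)$, hence all $n$ roots at once.

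This cannot be patched, because the statement fails in $L^p_{\mathcal O}$. Let $a\in(0,1)$ be irrational, and let $G$ and $H$ be the images of $\Q a$ and $\Q(a+1)$ in $\mathbb S\cong\R/\Z$. Both are divisible, torsion-free, dense (hence regularly dense) and of infinite index, so $(\mathbb S,G)$ and $(\mathbb S,H)$ are models of $T\big((\vec0,\vec\infty),(\vec0,\vec0)\big)$. Since $a\in G\cap H$, $(\Z a,\Z a)$ is a common substructure, and $a$ has the same quantifier-free type in both pairs. But the half of $a$ in $G$ is $a/2$, while the half of $a$ in $H$ is $(a+1)/2=a/2+1/2$. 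Hence $\exists y\,\big(V(y)\wedge 2y=x\wedge\mathcal O(0,y,x)\big)$ holds at $a$ in $(\mathbb S,G)$ and fails in $(\mathbb S,H)$. Any quantifier elimination here needs an expanded language, e.g.\ predicates recording which $n$-th root (in $<_0$-order) of an element of $V$ lies in $V$. Separately, your hypothesis (3) fails when $C\cap G=\{0\}$, since then $D'=\{0\}$ is not dense. So Proposition~\ref{fact} does not apply to every common substructure either.
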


\begin{proof}
  The proof proceeds as in the previous case, with the only modification that no torsion part is added to the distinguished subgroup, since it is torsion-free in models of $T\big((\vec{0},\vec{\infty}),(\vec{0},\vec{0})\big)$.
\end{proof}

\section{VC-Density Calculations}\label{main_result_section}
Let $L$ be a first order language and $\psi(\vec{x};\vec{y})$ be a \textit{partitioned $L$-formula} in object variables $\vec{x}=(x_1,\ldots,x_m)$ and parameter variables $\vec{y}=(y_1,\ldots,y_n)$. The \textit{shatter function}, \textit{VC-dimension}, and \textit{VC-density} associated to $\psi$ are elementary invariants and our original theories are complete, we will simply use the notations $\pi_\psi$, $\VC(\psi)$, and $\vc(\psi)$ to denote these combinatorial parameters, respectively. 

\medskip
In \cite{VC-density-I}, Aschenbrenner et al. introduce general methods yielding uniform bounds on VC-density of formulas. One of them, which will be a key tool for our main theorems, is stated below as Theorem \ref{keytool}. Since we are interested in theories with quantifier elimination, instead of counting the diversity of types over finite sets, we will benefit from the relation between \textit{breadth} and UDTFS. A collection $\mathcal{S}$ of sets is said to have \textit{breadth $d$} if any nonempty finite intersection of members of $\mathcal{S}$ is given by the intersection of $d$ of them. The following result shows that finite breadth gives uniform definability of types over finite sets.

\begin{proposition}\cite[Lemma 5.2]{VC-density-I}\label{breadth}
     Let $\mathcal{M}$ be an $L$-structure and $\Delta(x;\vec{y})$ be a finite set of partitioned $L$-formulas in the single object variable $x$. If $\mathcal{S}_{\Delta}:=\{\psi(M;\vec{c}): \psi\in \Delta, \vec{c}\in M^{|\vec{y}|}\}$ has breadth $d$, then $\Delta$ has UDTFS with $d$ parameters.
\end{proposition}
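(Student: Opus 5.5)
The plan is to prove the statement directly from the definition of UDTFS, using the fact that in the intersection structure of $\mathcal{S}_\Delta$, a $\Delta$-type is determined by the smallest set in $\mathcal{S}_\Delta$-intersections containing the realizing point. Recall what has to be produced: finitely many families of formulas $\big(d_\psi(\vec{y};\vec{y}_1,\dots,\vec{y}_d)\big)_{\psi\in\Delta}$, with each $\vec{y}_j$ a copy of $\vec{y}$. The requirement is that for every finite $B\subseteq M^{|\vec{y}|}$ and every $a\in M$ there are one such family and $\vec{b}_1,\dots,\vec{b}_d\in B$ such that, for all $\psi\in\Delta$ and $\vec{b}\in B$,
$$\mathcal{M}\models\psi(a;\vec{b})\iff \mathcal{M}\models d_\psi(\vec{b};\vec{b}_1,\dots,\vec{b}_d).$$

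Fix a finite $B$ and $a\in M$. I would first treat the degenerate cases, where $B=\emptyset$ or $a$ satisfies no instance $\psi(x;\vec{b})$ with $\psi\in\Delta$ and $\vec{b}\in B$. In these cases I use the family $d_\psi(\vec{y}):=\forall x\,\psi(x;\vec{y})$ for every $\psi$. This family correctly defines the type: if $\psi(a;\vec{b})$ fails, then $\forall x\,\psi(x;\vec{b})$ fails; and the converse direction is vacuous, since no instance holds of $a$. When $B=\emptyset$ the condition is empty, so any family works; when parameters are required to come from $B$ but the formulas ignore them, no parameters are needed.

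In the main case, let $\mathcal{F}=\{\psi(M;\vec{b}) : \psi\in\Delta,\ \vec{b}\in B,\ \mathcal{M}\models\psi(a;\vec{b})\}$. This is a nonempty finite subfamily of $\mathcal{S}_\Delta$ whose intersection $X$ contains $a$, so $X$ is nonempty. By breadth $d$ we get $X=\psi_1(M;\vec{b}_1)\cap\dots\cap\psi_d(M;\vec{b}_d)$ for some members of $\mathcal{F}$. If fewer than $d$ members suffice, I repeat one of them, so that exactly $d$ parameter tuples from $B$ are used. For each choice $(\psi_1,\dots,\psi_d)\in\Delta^d$ I then define
$$d_\psi(\vec{y};\vec{y}_1,\dots,\vec{y}_d):=\forall x\Big(\bigwedge_{j=1}^d\psi_j(x;\vec{y}_j)\rightarrow\psi(x;\vec{y})\Big).$$
This gives at most $|\Delta|^d+1$ families in total, a number independent of $B$ and $a$.

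To check correctness, take $\psi\in\Delta$ and $\vec{b}\in B$. If $\psi(a;\vec{b})$ holds, then $\psi(M;\vec{b})\in\mathcal{F}$, so $X\subseteq\psi(M;\vec{b})$, which is exactly $d_\psi(\vec{b};\vec{b}_1,\dots,\vec{b}_d)$. Conversely, if $d_\psi(\vec{b};\vec{b}_1,\dots,\vec{b}_d)$ holds, then since $a\in X$ we get $\psi(a;\vec{b})$.

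The argument is essentially routine. The only point requiring care is bookkeeping so that the defining formulas are uniform, namely finitely many schemes not depending on $B$. This is handled by indexing the schemes by the tuple $(\psi_1,\dots,\psi_d)\in\Delta^d$, plus the one extra scheme for the degenerate case, and by padding with repeated parameters to use exactly $d$ of them.
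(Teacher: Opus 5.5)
The paper gives no proof of its own here and simply quotes \cite[Lemma 5.2]{VC-density-I}. Your argument is correct and is essentially the standard proof from that source: breadth $d$ lets you pick $d$ positive instances $\psi_1(x;\vec{b}_1),\dots,\psi_d(x;\vec{b}_d)$ whose common solution set equals that of all positive instances, each $\psi$ is then defined by $\forall x\big(\bigwedge_{j}\psi_j(x;\vec{y}_j)\rightarrow\psi(x;\vec{y})\big)$, and the fact that $a$ lies in that intersection takes care of the negative instances; the remaining cases ($B=\emptyset$, or no positive instance) are conventions in the definition of UDTFS, which your extra trivial family handles adequately.
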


\medskip\noindent
A complete theory $T$ has the \textit{VC $d$ property} if any finite set of partitioned $L$-formulas in the single object variable has UDTFS with $d$ parameters.  

\begin{theorem}\cite[Corollary 5.6]{VC-density-I}\label{keytool}
Let $\Omega$ be a collection of partitioned $L$-formulas in the single object variable $x$. Suppose that any partitioned $L$-formula is equivalent in $T$ to a boolean combination of formulas from $\Omega$. If any finite subset of $\Omega$ has UDTFS in $T$ with $d$ parameters, then $T$ has the VC $d$ property.
\end{theorem}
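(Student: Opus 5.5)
Theorem~\ref{keytool} is quoted from \cite{VC-density-I}, so we only outline why it holds. The idea is that the VC $d$ property is a statement about finite sets of formulas in the single object variable $x$, and uniform definitions of types transfer through Boolean combinations. We use UDTFS in the form of \cite{VC-density-I}: a finite set $\Delta(x;\vec{y})$ has UDTFS with $d$ parameters if there are finitely many families of formulas $\big(d_{i,\psi}(\vec{y};\vec{y}_1,\dots,\vec{y}_d)\big)_{\psi\in\Delta}$, indexed by $i<N$, with the following property. For every finite $B\subseteq M^{|\vec{y}|}$ with $|B|\geq 2$ and every $a\in M$, there are an index $i$ and $\vec{b}_1,\dots,\vec{b}_d\in B$ such that for all $\psi\in\Delta$ and all $\vec{b}\in B$,
\[
\models\psi(a;\vec{b}) \iff \models d_{i,\psi}(\vec{b};\vec{b}_1,\dots,\vec{b}_d).
\]

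\textbf{Step 1: reduce $\Delta$ to a finite set of $\Omega$-formulas.} Fix a finite set $\Delta(x;\vec{y})$ of partitioned formulas; padding with dummy variables, we may assume all share one parameter tuple $\vec{y}$. By hypothesis, each $\psi\in\Delta$ is equivalent in $T$ to a Boolean combination $\beta_\psi$ of finitely many instances $\omega(x;\vec{y}_\omega)$, with $\omega\in\Omega$ and $\vec{y}_\omega$ a subtuple of $\vec{y}$. I would first check that the intended reading is that the parameter terms are only variables from $\vec{y}$; if they were allowed to be terms, one would replace $\omega$ by the formula $\omega(x;t(\vec{y}))$. Rewriting each instance as $\omega'(x;\vec{y})$ by adding dummy parameter variables leaves its UDTFS behaviour unchanged. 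This produces a finite set $\Omega_0(x;\vec{y})$ which has UDTFS with $d$ parameters by assumption, since it is a finite subset of (the dummy-padded) $\Omega$.

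\textbf{Step 2: transfer the uniform definitions.} Let $(d_{i,\omega})_{i<N,\ \omega\in\Omega_0}$ be the defining schemes for $\Omega_0$. For $\psi\in\Delta$, define $d_{i,\psi}$ as the Boolean combination $\beta_\psi$ in which each occurrence of $\omega(x;\vec{y})$ is replaced by $d_{i,\omega}(\vec{y};\vec{y}_1,\dots,\vec{y}_d)$.

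Now let $B$ be finite and $a\in M$. Choose $i$ and $\vec{b}_1,\dots,\vec{b}_d\in B$ defining the $\Omega_0$-type of $a$ over $B$. For each $\vec{b}\in B$, the truth value of $\psi(a;\vec{b})$ equals that of $\beta_\psi$ evaluated at the truth values of $\omega(a;\vec{b})$, and these agree with the truth values of $d_{i,\omega}(\vec{b};\vec{b}_1,\dots,\vec{b}_d)$. Hence $d_{i,\psi}$ defines the $\Delta$-type of $a$ over $B$ with the same $d$ parameters. So $\Delta$ has UDTFS with $d$ parameters, and since $\Delta$ was arbitrary, $T$ has the VC $d$ property.

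\textbf{Main obstacle.} The delicate point is the \emph{uniformity of the index}. The transfer in Step 2 only works because UDTFS for the whole finite set $\Omega_0$ provides a single index $i$ and a single parameter tuple serving all $\omega\in\Omega_0$ simultaneously. If one only had separate UDTFS for each individual $\omega$, the parameter counts would add up rather than stay at $d$. This is exactly why the hypothesis is imposed on every \emph{finite subset} of $\Omega$ rather than on single formulas, so the proof must apply it to the set $\Omega_0$ as a whole. The remaining bookkeeping (common parameter tuple, dummy variables, the degenerate case $|B|<2$) is routine.
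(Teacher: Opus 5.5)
Your Step~2 is the right mechanism: one index $i$ and one tuple $\vec b_1,\dots,\vec b_d$ that define the whole $\Omega_0$-type of $a$ over $B$ also define the $\Delta$-type, once you substitute the defining formulas into $\beta_\psi$. The paper gives no argument of its own here and simply quotes Aschenbrenner et al. The gap is in Step~1, where you say $\Omega_0$ has UDTFS with $d$ parameters ``by assumption''.

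The members of $\Omega_0$ are substitution instances $\omega(x;\vec y_\omega)$. Different instances share variables of $\vec y$, and the same $\omega$ may occur several times with different subtuples, e.g.\ $\psi\leftrightarrow\omega(x;y_1)\wedge\omega(x;y_2)$. Whatever convention puts a finite subset of $\Omega$ on a common parameter tuple, adding dummy variables does not produce such overlapping or repeated instances. So $\Omega_0$ is not a finite subset of (padded) $\Omega$, and the hypothesis does not apply to it as stated. This is exactly where you must check that the number of parameters does not grow. The uniformity of the index, which you call the main obstacle, is already built into the definition of UDTFS for a finite set. If $\Omega$ is closed under such substitutions, as in the paper's applications where $\vec k$ ranges over all of $\mathbb{Z}^{|\vec y|}$, each instance already lies in $\Omega$ with parameter tuple $\vec y$ and Step~1 is immediate. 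The general statement needs an extra lemma.

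The lemma is short. Let $\Omega_1(x;\vec z)$ be a finite set with UDTFS with $d$ parameters. Let $\Omega_0(x;\vec y)$ consist of formulas $\omega(x;\tau(\vec y))$ with $\omega\in\Omega_1$ and $\tau$ from a finite set $T_0$ of $|\vec z|$-tuples of variables (or terms) in $\vec y$.
\begin{itemize}
\item Given a finite $B$, put $C=\{\tau(\vec b):\vec b\in B,\ \tau\in T_0\}$. The $\Omega_1$-type of $a$ over $C$ determines the $\Omega_0$-type of $a$ over $B$.
\item Each defining parameter $\vec c_m\in C$ equals $\tau_m(\vec b_m)$ for a \emph{single} $\vec b_m\in B$. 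Hence $d_{i,\omega}(\tau(\vec y);\tau_1(\vec y_1),\dots,\tau_d(\vec y_d))$ defines $\omega(x;\tau(\vec y))$ over $B$ with parameters $\vec b_1,\dots,\vec b_d$.
\item The choice of $(\tau_1,\dots,\tau_d)\in T_0^d$ only enlarges the finite index set, so the parameter count stays at $d$.
\end{itemize}
The degenerate case that actually needs attention is $|C|=1$, which can happen even when $|B|\ge 2$. Then each $\omega(a;\tau(\vec b))$ is independent of $\tau$ and $\vec b$, and finitely many constant schemes suffice. Apply this lemma to the finitely many members of $\Omega$ occurring in the $\beta_\psi$, then run your Step~2, and the proof is complete.
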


\medskip
We now state the following corollary, which serves as the final step before the proofs of the main theorems.
 
\begin{corollary}\cite[Corollary 5.8]{VC-density-I}\label{vc_d_vcdensity}
If $T$ has the VC $d$ property, then any partitioned formula $\psi(\vec{x};\vec{y})$ has the VC-density at most $d|\vec{y}|$.
\end{corollary}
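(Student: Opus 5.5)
This is \cite[Corollary 5.8]{VC-density-I}. The plan is to prove it by a counting argument. First upgrade UDTFS from one object variable to $n$ object variables by induction on $n$. Then convert UDTFS with $k$ parameters into a polynomial bound of degree $k$ on the number of types over finite sets.

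Here $\pi_\psi(t)$ counts $\psi$-types over finite parameter sets of size $t$, and the bound $d|\vec{y}|$ refers to the tuple whose types are being counted. So we work with the dual formula $\psi^*(\vec{y};\vec{x}):=\psi(\vec{x};\vec{y})$: $\vec{y}=(y_1,\dots,y_n)$ plays the role of the object tuple and finite sets $B\subseteq M^{|\vec{x}|}$ are the parameter sets.

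\textbf{Step 1 (counting from UDTFS).} Suppose a finite set $\Delta(\vec{z};\vec{w})$ has UDTFS with $k$ parameters. Then every $\Delta$-type over a finite $B$ with $|B|\geq 2$ is defined by one of finitely many defining formulas, instantiated at a $k$-tuple from $B$. Hence $|S^\Delta(B)|\le C\,|B|^{k}$ for a constant $C$ depending only on $\Delta$. From this the density of $\Delta$ is at most $k$.

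\textbf{Step 2 (induction on $n=|\vec{y}|$).} The base case $n=1$ is exactly the VC $d$ property. For $n>1$, write $\vec{y}=(y_1,\vec{y}')$ and regard $\psi^*$ as a formula in $\vec{y}'$ with parameter variables $(y_1,\vec{x})$. By the inductive hypothesis, applied to the finite set $\{\psi^*(y_1,\vec{y}';\vec{x})\}$ viewed in the variables $\vec{y}'$, there are finitely many defining formulas $\delta_j$. Given a realization $(c_1,\vec{c}')$, the $\psi^*$-type of $\vec{c}'$ over $\{c_1\}\times B$ is defined by some $\delta_j$. Its parameters are $d(n-1)$ elements of $\{c_1\}\times B$, i.e. pairs $(c_1,b)$ with $b\in B$.

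Now enlarge to a finite set $\Delta'(y_1;\ldots)$ in the single variable $y_1$. It consists of $\psi^*$ together with the formulas $\delta_j(y_1,\vec{b}_1,\dots;y_1,\vec{x})$, viewed as formulas in $y_1$ with the $B$-elements as parameters. The $\Delta'$-type of $c_1$ over $B$ then determines, for each choice of index $j$ and of $d(n-1)$ elements of $B$, which set the corresponding definition cuts out. So the full $\psi^*$-type of $(c_1,\vec{c}')$ over $B$ is determined by two pieces of data: the $\Delta'$-type of $c_1$ over $B$, and the choice of $j$ together with $d(n-1)$ elements of $B$. By the VC $d$ property for $\Delta'$ and Step 1, this gives at most $C'|B|^{d}\cdot C''|B|^{d(n-1)}=O(|B|^{dn})$ types. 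The same bookkeeping can be packaged as UDTFS with $dn$ parameters, but for the corollary only the count is needed.

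\textbf{Conclusion and main obstacle.} Combining the two steps gives $\pi_\psi(t)=O(t^{dn})$, so $\vc(\psi)\le d|\vec{y}|$. The delicate point is Step 2. There we must ensure that the defining parameters of the fibred type involve $c_1$ only through a \emph{finite} set of formulas over $B$. This is why $\Delta'$ must include all instances of the finitely many defining schemes $\delta_j$. Finiteness of the family of schemes is what the uniformity in UDTFS supplies, and it is what keeps the constant independent of $B$.
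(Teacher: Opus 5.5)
The paper gives no argument beyond citing \cite{VC-density-I}, and your strategy is the standard one: induct on the number of object variables, move $y_1$ to the parameter side, compose defining schemes, then count as in Step 1. However, Step 2 has a genuine gap. Write $m=d(n-1)$. The formulas of $\Delta'$ are the $\delta_j$ read as formulas in $y_1$ with parameter variables $(\vec{x},\vec{x}_1,\ldots,\vec{x}_m)$. So the $\Delta'$-type of $c_1$ that determines the cut-out set for \emph{every} choice of $j$ and $\vec{b}_1,\ldots,\vec{b}_m$ is a type over all $|B|^{m+1}$ tuples from $B$. The VC $d$ property and Step 1 bound the number of such types only by $O(|B|^{d(m+1)})$, not by $C'|B|^{d}$, and this cannot be improved in general. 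For example, in $T(\vec{0},\vec{0})$, which has VC $1$, the formula $\mathcal{O}(0,y_1,x+x_1)$ already has on the order of $|B|^2$ types of $y_1$ over pairs from a suitable $B$: $y_1$ can sit in any arc determined by $B+B$. With the correct bound, your product has exponent $d(m+1)+m>dn$. What you need instead is to count \emph{slices}. Once $j$ and $\vec{b}_1,\ldots,\vec{b}_m$ are fixed, only the $\delta_j$-type of $c_1$ over the $|B|$-element set $S=\{(\vec{b},\vec{b}_1,\ldots,\vec{b}_m):\vec{b}\in B\}$ matters. There are at most $C|B|^{d}$ of those for each choice, giving $O(|B|^{m}\cdot|B|^{d})=O(|B|^{dn})$ in total.

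Second, the induction does not close. Your hypothesis at $n-1$ is UDTFS, since you use the schemes $\delta_j$, but your step delivers only a count at $n$, so you cannot go on to $n+1$. A count at $n-1$ by itself gives no control as $c_1$ varies, because the parameter sets $\{c_1\}\times B$ change with $c_1$. The packaging into UDTFS with $dn$ parameters is therefore not optional, and it too needs the slice. Apply the VC $d$ property to the finitely many $\delta_j$: the $\delta_j$-type of $c_1$ over $S$ is defined, via one of finitely many schemes $\eta_k$, by $d$ elements of $S$. Each has the form $(\vec{e}_l,\vec{b}_1,\ldots,\vec{b}_m)$, with only $\vec{e}_l\in B$ new. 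Substituting defines the $\psi^*$-type of $(c_1,\vec{c}')$ over $B$ by a scheme indexed by $(j,k)$, with the $dn$ parameters $\vec{b}_1,\ldots,\vec{b}_m,\vec{e}_1,\ldots,\vec{e}_d$. Note that $|\{c_1\}\times B|=|S|=|B|\geq 2$, as UDTFS requires. A definition of the full $\Delta'$-type would instead cost $d(m+1)+m$ parameters. With UDTFS with $d|\vec{y}|$ parameters as the inductive statement, your Step 1 then yields $\pi_\psi(t)=O(t^{d|\vec{y}|})$.
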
 

\medskip
We are now ready to prove Theorem \ref{main_thm1}  stated in the Introduction. 

\begin{theorem}\label{mainthm_1}
The $L_\mathcal{O}$-theories $T(\vec{0},\vec{e})$ where $\vec{e}\in \{\vec{0},\vec{\infty}\}$ have the VC $1$ property.
\end{theorem}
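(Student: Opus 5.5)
We follow the strategy laid out above: combine quantifier elimination (Theorem \ref{qe_orientedgps}) with Theorem \ref{keytool} and Proposition \ref{breadth}. By Theorem \ref{qe_orientedgps}, every partitioned $L_\mathcal{O}$-formula $\psi(x;\vec{y})$ in a single object variable is equivalent in $T(\vec{0},\vec{e})$ to a boolean combination of atomic formulas. The atomic formulas are equations $kx=t(\vec{y})$ and orientation atoms $\mathcal{O}(k_1x+t_1(\vec{y}),k_2x+t_2(\vec{y}),k_3x+t_3(\vec{y}))$ with $k,k_i\in\Z$ and $t,t_i$ $\Z$-linear terms. The first step is to replace these by a simpler family $\Omega$ of formulas in $x$ whose boolean combinations still capture everything.

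\textbf{Step 1 (normal form).} By cyclic invariance and translation invariance of $\mathcal{O}$, we have $\mathcal{O}(u,v,w)\leftrightarrow\mathcal{O}(0,v-u,w-u)$. Hence each orientation atom reduces to $\mathcal{O}(0,k x+s(\vec{y}),k' x+s'(\vec{y}))$. For fixed $k\neq 0$, the set $\{x: kx\in I\}$ for an orientation interval $I$ is, by divisibility, a union of $|k|$ rotated copies (translated by the $k$-torsion, possibly trivial) of an interval. This is awkward. Instead, we take $\Omega$ to consist of the formulas $kx=y_1$ and $\mathcal{O}(y_1,kx,y_2)$ for $k\in\Z$. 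These are considered together with the formulas $\mathcal{O}(y_1,kx+y_3,k'x+y_4)$, and we must check that the mixed orientation atoms decompose into boolean combinations of the pure ones. Here the trick is that $\mathcal{O}(0,a,b)$ is equivalent to $0,a,b$ being distinct with $a<_0 b$. Working in the linear order $<_0$ on $A\setminus\{0\}$ and using the ordered cover $A^*$, one expresses $a<_0 b$ with $a=kx+s$, $b=k'x+s'$ as a boolean combination of conditions of the form $mx\in(c,d)_\mathcal{O}$ or $mx=c$. The terms $c,d$ are built from $\vec{y}$ (dividing via divisibility), which is legitimate since $\Omega$ may use any definable parameter terms.

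\textbf{Step 2 (breadth).} For each finite $\Delta\subseteq\Omega$, we show that $\mathcal{S}_\Delta$ has breadth $1$ after a further refinement, so that Proposition \ref{breadth} gives UDTFS with $1$ parameter. Concretely, we want the relevant sets to be "intervals" in a suitable sense: any nonempty finite intersection of them equals one of them. Intervals on a circle do not have breadth $1$, since two arcs can intersect in two pieces. We therefore cut at $0$ and pass to half-line sets: we replace $\mathcal{O}(c,kx,d)$ by a boolean combination of the sets $\{x: kx<_0 c\}$ and $\{x: c<_0 kx\}$. For fixed $k$ and fixed direction, these families form chains, hence have breadth $1$. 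Direct application of Proposition \ref{breadth} needs breadth of the whole $\mathcal{S}_\Delta$. So we instead apply Theorem \ref{keytool} with $\Omega$ consisting of the single formulas $kx<_0 y$, $y<_0 kx$, and $kx=y$ (each a chain or a family of singletons-within-fibres). Each singleton $\Delta$ then has breadth $1$. For finite $\Delta$ containing several such formulas, we use the standard fact from \cite{VC-density-I} that UDTFS with $d$ parameters is preserved for finite sets built from chains by choosing, for each type over a finite parameter set, the extremal parameter, still one per formula. I would verify this by mimicking the o-minimal ordered case (divisible ordered abelian groups have VC $1$).

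\textbf{Main obstacle.} The delicate point is Step 1 together with the multiplier $k$. The map $x\mapsto kx$ is not injective in the torsion case $\vec{e}=\vec{\infty}$, and $kx<_0 c$ is not a chain in $x$ under $<_0$. I would handle this by noting that breadth and UDTFS concern the family of sets $\{x: kx<_0 c\}$ as $c$ varies. For fixed $k$ these are nested in $c$ regardless of $x$-structure, so the family is a chain and has breadth $1$. This reduces the whole proof to checking that the nested families $\{kx<_0 y\}$, $\{y<_0 kx\}$, $\{kx=y\}$ generate all formulas via boolean combinations, which follows from quantifier elimination and the cut-at-$0$ normal form. Theorem \ref{keytool} then yields the VC $1$ property.
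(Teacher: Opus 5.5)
\textbf{Gap 1: one parameter for all of $\Delta$.} Your framework matches the paper's (quantifier elimination, Theorem~\ref{keytool}, breadth), but the decisive step fails. It is the passage in Step 2 from single formulas to a finite $\Delta$. Theorem~\ref{keytool} requires UDTFS with \emph{one} parameter for every finite $\Delta\subseteq\Omega$. Choosing ``the extremal parameter, still one per formula'' only gives UDTFS with $|\Delta|$ parameters.

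Proposition~\ref{breadth} does not help either. For fixed $k$ the sets $\{x: kx<_0 c\}$ form a chain, but the chains for different $k$ are not nested in $x$. Identify $\mathbb{S}$ with $[0,1)$ via $\phi$. Then $\{x: x<_0 0.7\}\cap\{x: 2x<_0 0.6\}=(0,0.3)\cup(0.5,0.7)$, which is neither set, so $\mathcal{S}_\Delta$ does not have breadth $1$ for $\Delta=\{x<_0 y,\ 2x<_0 y\}$.

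The ordered-group argument you want to mimic works only because every set there is a half-line of \emph{one} order in $x$. A single nearest boundary point then defines the whole $\Delta$-type. That is the entire content of the paper's proof. Its $\Omega$ consists of $x=\vec k\vec y$ and $\mathcal{O}(0,x,\vec k\vec y)$, so every member of $\mathcal{S}_\Delta$ is a singleton or an initial segment $(0,a)_\mathcal{O}$ of $<_0$, and breadth $1$ is immediate.

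\textbf{Gap 2: Step 1 fails for $\vec e=\vec 0$.} In $A_a$ the parameter-free atom $\mathcal{O}(0,x,2x)$ defines $A_a\cap(0,\tfrac12)$, and $\tfrac12\notin A_a$. Since $\operatorname{dcl}(\emptyset)=\{0\}$ there, every condition $mx\in(c,d)_\mathcal{O}$ or $mx=c$ with $c,d\in\operatorname{dcl}(\emptyset)$ defines one of $\emptyset,\{0\},A_a\setminus\{0\},A_a$. No boolean combination of such conditions gives $A_a\cap(0,\tfrac12)$.

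\textbf{How to repair it.} Your concern about multipliers is legitimate: the paper's $\Omega$, where $x$ appears only with coefficient $1$, does not visibly cover atoms such as $\mathcal{O}(0,2x,y)$. But the repair must keep a \emph{single} order rather than one chain per $k$. Every atomic set in $x$ is a finite union of points and $<_0$-convex sets whose boundary cuts are uniformly definable from the parameters. When $\vec e=\vec\infty$ these cuts are roots of parameter terms. When $\vec e=\vec 0$ some are non-realized, like $\tfrac12$ above.

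So take $\Omega$ to consist of equalities $x=f(\vec y)$ with $f$ definable, together with formulas defining these $<_0$-initial segments. Then each $\mathcal{S}_\Delta$ consists of singletons and members of one chain, hence has breadth $1$ as in the paper. This is the weakly o-minimal argument of \cite{VC-density-I}, applied to $<_0$ with $0$ placed first.
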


\begin{proof}
The proof proceeds exactly in the same way for both of the theories $T(\vec{0},\vec{0})$ and $T(\vec{0},\vec{\infty})$, so we only give the proof for $T(\vec{0},\vec{0})$.

\medskip\noindent
Let $A$ be a model of $T(\vec{0},\vec{0})$ and consider the collection $\Omega$ of the $L_{\mathcal{O}}$-formulas \beq x=\vec{k}\vec{y}\ \text{and}\ \mathcal{O}(0,x,\vec{k}\vec{y}),\label{atomic1}\eeq
in a single object variable $x$ and a tuple of parameter variables $\vec{y}$, where the lengths of the tuples $\vec{y}$ vary, and $\vec{k}$ ranges over  $\mathbb{Z}^{|\vec{y}|}$.

\medskip\noindent
 By Theorem~\ref{qe_orientedgps} and Theorem~\ref{keytool}, it remains to show that any finite $\Delta\subseteq \Omega$ has UDTFS with $1$ parameter. We show this by using Proposition \ref{breadth}, so consider the collection $\mathcal{S}_\Delta$. Observe that any set defined by a formula of the form (\ref{atomic1}) is either a singleton or an orientation interval in $A$. Take $S_1,S_2\in \mathcal{S}_{\Delta}$ such that $S_1\cap S_2\neq \emptyset$. If $S_1$ or $S_2$ is a singleton, then we are done. We may assume then $S_1$ and $S_2$ are both orientation intervals in $A$, so let $S_1=(0,a_1)_{\mathcal{O}}$ and $S_2=(0,a_2)_{\mathcal{O}}$, where $a_1,a_2\in A$. As their intersection is nonempty, we have necessarily $S_1\subseteq S_2$ or $S_2\subseteq S_1$. It follows then $\mathcal{S}_\Delta$ has breadth $1$.
\end{proof}

The following result follows from Corollary \ref{vc_d_vcdensity}.

\begin{corollary}\label{vcbound-L_O}
Any partitioned $L_{\mathcal{O}}$-formula $\psi(\vec{x};\vec{y})$ has the VC-density at most $|\vec{y}|$ in $T(\vec{0},\vec{e})$, where $\vec{e}\in \{\vec{0},\vec{\infty}\}$.   
\end{corollary}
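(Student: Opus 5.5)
This corollary is obtained by feeding Theorem~\ref{mainthm_1} into Corollary~\ref{vc_d_vcdensity} with $d=1$. I would first fix $\vec{e}\in\{\vec{0},\vec{\infty}\}$ and check that Corollary~\ref{vc_d_vcdensity} applies to $T(\vec{0},\vec{e})$. That corollary is stated for a complete theory $T$, and completeness of $T(\vec{0},\vec{e})$ is given by Corollary~\ref{complete}, since it is $T(\vec{d},\vec{e})$ with $\vec{d}=\vec{0}$. Completeness also makes $\vc(\psi)$ independent of the choice of model, so the notation in the statement is justified.

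Next, Theorem~\ref{mainthm_1} says that $T(\vec{0},\vec{e})$ has the VC $1$ property. Applying Corollary~\ref{vc_d_vcdensity} with $d=1$ then gives, for every partitioned $L_{\mathcal{O}}$-formula $\psi(\vec{x};\vec{y})$, the bound $\vc(\psi)\le 1\cdot|\vec{y}|=|\vec{y}|$. This holds for object tuples $\vec{x}$ of any length, because Corollary~\ref{vc_d_vcdensity} passes from the single-variable UDTFS hypothesis to arbitrary partitioned formulas.

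There is essentially no obstacle here; all the content sits in Theorem~\ref{mainthm_1}. The one point worth rechecking is the breadth-$1$ step in that proof. The definable sets there are really orientation intervals $(0,\vec{k}\vec{c})_{\mathcal{O}}$ with left endpoint $0$, together with singletons. Any two such intervals starting at $0$ are nested, so a nonempty finite intersection equals a single member, which is exactly breadth $1$. With that confirmed, Proposition~\ref{breadth} gives UDTFS with one parameter for each finite $\Delta\subseteq\Omega$. Theorem~\ref{keytool}, together with the quantifier elimination of Theorem~\ref{qe_orientedgps}, then gives the VC $1$ property, and the corollary follows.
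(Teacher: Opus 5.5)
Your proposal is correct and matches the paper exactly: the corollary is Corollary~\ref{vc_d_vcdensity} with $d=1$, applied to the VC $1$ property from Theorem~\ref{mainthm_1}. One caveat on your side remark, which concerns Theorem~\ref{mainthm_1} rather than this deduction: the breadth-$1$ check for the sets $(0,\vec{k}\vec{c})_{\mathcal{O}}$ is fine, but the more delicate hypothesis there is that every formula is a Boolean combination of formulas from $\Omega$, and your check does not touch it. Quantifier elimination also produces atoms with multiples of $x$, such as $\mathcal{O}(0,2x,y)$, whose instances $\{x:\mathcal{O}(0,2x,b)\}$ need not be Boolean combinations of points $kb$ and intervals $(0,kb)_{\mathcal{O}}$ with $k\in\Z$.
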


\begin{corollary}\label{vcbound-L_O}
For $\vec{e}\in \{\vec{0},\vec{\infty}\}$, the $L_\mathcal{O}$-theory $T(\vec{0},\vec{e})$ is dp-minimal.   
\end{corollary}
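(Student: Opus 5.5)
The plan is to derive dp-minimality directly from Theorem~\ref{mainthm_1} together with the general link between VC-density and dp-rank from \cite{VC-density-I}. The key fact I will use is that if a complete theory $T$ has the VC $1$ property, then every partitioned formula $\psi(x;\vec{y})$ in a \emph{single} object variable $x$ has VC-density at most $1$ in the dual direction. The relevant bound is the one for formulas $\psi(\vec{x};y)$ with $|y|=1$, obtained from Corollary~\ref{vc_d_vcdensity} applied to the opposite formula $\psi^{\mathrm{opp}}(y;\vec{x})$. By Corollary~\ref{vcbound-L_O} (the VC-density bound), any partitioned $L_{\mathcal{O}}$-formula whose parameter tuple has length $1$ has VC-density at most $1$. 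This means that for any $L_{\mathcal{O}}$-formula $\varphi(x;\vec{z})$ with $x$ a single variable, the dual shatter function grows at most linearly in $T(\vec{0},\vec{e})$.

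Next I will argue by contradiction using ICT patterns. Suppose $T(\vec{0},\vec{e})$ is not dp-minimal. Then there are formulas $\varphi_1(x;\vec{z}_1)$ and $\varphi_2(x;\vec{z}_2)$ in the single variable $x$, together with sequences $(\vec{b}_{1,i})_{i}$ and $(\vec{b}_{2,j})_{j}$, forming an ICT pattern of depth $2$. In other words, for every pair $(i,j)$ there is $a_{ij}$ satisfying $\varphi_1(x;\vec{b}_{1,i'})$ exactly when $i'=i$, and $\varphi_2(x;\vec{b}_{2,j'})$ exactly when $j'=j$. Merging these into one formula $\psi(x;\vec{z}_1,\vec{z}_2,w)$ that codes the disjunction $\varphi_1\vee\varphi_2$ is standard. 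Alternatively, I can keep them as the finite set $\Delta=\{\varphi_1,\varphi_2\}$. With $n$ parameters from each row, the $2n$ parameter sets realize at least $n^2$ distinct $\Delta$-types over this set. This forces the dual VC-density (the $\Delta$-types in the single variable $x$ over finite parameter sets) to be at least $2$. That contradicts the VC $1$ property, which by \cite[Corollary 5.8 and its proof]{VC-density-I} gives at most $O(N)$ such types over any set of $N$ parameters.

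The main obstacle is pinning down the exact form of the implication used. The paper's stated Corollary~\ref{vc_d_vcdensity} bounds $\vc(\psi)$ by $d|\vec{y}|$. I need the version counting types of a single element $x$ over $N$ parameters, i.e.\ $\vc^*$-type bounds, $\pi^*_\Delta(N)=O(N)$. If that is awkward, I will instead cite directly the remark in \cite{VC-density-I} that the VC $1$ property (equivalently UDTFS with one parameter for formulas in one object variable) implies dp-minimality. That remark is also mentioned in the Introduction of this paper, so the corollary follows immediately from Theorem~\ref{mainthm_1}.
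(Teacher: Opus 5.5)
Your proposal is correct and follows the paper's route. The paper obtains the corollary directly from Theorem~\ref{mainthm_1} together with the fact from \cite{VC-density-I} that the VC $1$ property implies dp-minimality. Your ICT-pattern count is exactly the argument behind that fact: $n^2$ distinct $\Delta$-types over $2n$ parameters, against the at most $C\cdot N$ types over $N$ parameters that UDTFS with one parameter gives for $\Delta=\{\varphi_1,\varphi_2\}$.

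One small correction: take the linear bound directly from UDTFS with one parameter, not from ``VC-density at most $1$''. The latter only gives $O(N^{1+\varepsilon})$ growth, though that still suffices, because the pattern forces dual VC-density at least $2$.
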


\medskip
Finally, we prove Theorem \ref{main_thm2} given in the Introduction.
\begin{theorem}\label{mainthm_2}
    The $L_{\mathcal{O}}^p$-theories $T\big((\vec{0},\vec{0}),(\vec{0},\vec{0})\big)$ and $T\big((\vec{0},\vec{\infty}),(\vec{0},\vec{e})\big)$, where $\vec{e}\in \{\vec{0},\vec{\infty}\}$ have the VC $2$ property.
\end{theorem}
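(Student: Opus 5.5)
We follow the template of the proof of Theorem~\ref{mainthm_1}: exhibit a class $\Omega$ of formulas in one object variable, show via quantifier elimination that every formula is a boolean combination of members of $\Omega$, and then show that every finite $\Delta\subseteq\Omega$ yields a family $\mathcal{S}_\Delta$ of breadth at most $2$. Proposition~\ref{breadth} then gives UDTFS with $2$ parameters, and Theorem~\ref{keytool} gives the VC $2$ property.

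By Theorems~\ref{qe_pairs_1}, \ref{qe_pairs_2} and \ref{qe_pairs_3}, each theory in the statement admits quantifier elimination in $L_{\mathcal{O}}^p$. In a single object variable $x$, the atomic formulas are, up to trivial rewriting, of three kinds. First, equations $nx=\vec{k}\vec{y}$. Second, orientation atoms $\mathcal{O}(t_1,t_2,t_3)$ with terms linear in $x$ and $\vec y$. Third, predicate atoms $V(nx-\vec{k}\vec{y})$.

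We will normalize these as follows.
\begin{enumerate}
\item Divisibility lets us replace $x$ by $nx$ after reparametrizing. The orientation atoms can be normalized (possibly at the cost of a boolean combination, using that $\mathcal{O}$ is invariant under translation) to $\mathcal{O}(0,nx-\vec{k}\vec{y},\vec{l}\vec{y})$.
\item Torsion requires care, since $nx=c$ may have several solutions. Each such solution set is a finite coset of $\operatorname{Tor}_n(A)$.
\item A convenient choice of $\Omega$ is therefore: the formulas $x=\vec{k}\vec{y}$; orientation formulas defining orientation intervals with endpoints given by terms; and the formulas $V(x-\vec{k}\vec{y})$, defining cosets $c+G$.
\item The atom $V(nx-c)$ would be handled via divisibility of $G$. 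We have $nx-c\in G$ iff $x\in d+G+\operatorname{Tor}_n(A)$, where $nd=c$. In the torsion case $\operatorname{Tor}(A)=\operatorname{Tor}(G)$, so this is a single coset of $G$. In the torsion-free case it is immediate.
\item An orientation atom with a multiple $nx$ defines a finite union of orientation intervals (the preimage under multiplication by $n$). We would express it as a boolean combination of intervals whose endpoints are $n$-th roots of parameters. We hope these are definable from terms after replacing parameters, though this may need adjustment.
\end{enumerate}

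The breadth computation is the heart of the argument. The sets in $\mathcal{S}_\Delta$ are singletons, orientation intervals $(0,a)_{\mathcal{O}}$ (or more general arcs), and cosets $c+G$. We distinguish cases according to the types of sets in a finite nonempty intersection.
\begin{itemize}
\item If a singleton occurs, one set suffices.
\item Two cosets of $G$ that meet are equal, so the cosets in any nonempty intersection coincide and contribute a single set.
\item Among the intervals, the argument of Theorem~\ref{mainthm_1} shows they form a chain, so one interval (the smallest) suffices.
\end{itemize}
Hence any nonempty intersection equals the intersection of at most one interval and one coset, giving breadth $2$.

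The main obstacle is the normalization step. Intervals $(0,a)_{\mathcal{O}}$ are nested only when they share the base point $0$. General orientation intervals $(a,b)_{\mathcal{O}}$ need not form a chain and can have larger breadth: on a circle, two arcs can intersect in two pieces. We would therefore first try to write every orientation atom as a boolean combination of formulas $\mathcal{O}(0,x,\vec{k}\vec{y})$ and $\mathcal{O}(0,\vec{k}\vec{y},x)$. If mixing these "initial" and "final" arcs breaks breadth $1$ on the interval side, we would fall back on bounding the breadth of $\mathcal{O}(0,x,\cdot)$ together with cosets. Alternatively, we would argue directly that an intersection of such arcs and one coset is determined by $2$ sets, using that $\mathcal{O}(0,x,a)$ defines $(0,a)$ and that complements are allowed in boolean combinations. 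It suffices to keep only the basic sets $\mathcal{O}(0,x,t)$, $x=t$, and $V(x-t)$ in $\Omega$, with the remaining atoms obtained as boolean combinations.
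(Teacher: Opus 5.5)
Your outline is the paper's: quantifier elimination, an $\Omega$ made of $x=\vec k\vec y$, $\mathcal O(0,x,\vec k\vec y)$ and $V(x+\vec k\vec y)$, then Theorem~\ref{keytool} with Proposition~\ref{breadth}. Your breadth count (a singleton, or else the smallest initial arc together with the unique coset) is the paper's and is correct.

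\textbf{The gap.} It is the step you flag and leave open: that every formula in one object variable is a boolean combination of members of $\Omega$. The paper's proof also just cites quantifier elimination here. With integral $\vec k$ the claim is actually false, so your normalizations cannot all be carried out.

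\emph{Torsion cases.} Take $b$ of infinite order. Then $2x=y$ defines two points $r$ and $r+\zeta_2$, where $\zeta_2$ is the element of order $2$. Neither is of the form $kb$, since otherwise $(2k-1)b=0$. Look at the open arc between consecutive points of $\{0\}\cup\{kb\}$ (for the finitely many $k$ involved) that contains $r$. On that arc, any boolean combination of instances of $\Omega$ agrees with a boolean combination of cosets of $G$. Because $G$ is dense and $A/G$ is infinite, such a combination is empty or dense, so it cannot cut out a finite nonempty set.

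\emph{Torsion-free $A$.} Here the atom $\mathcal O(0,x,2x)$ defines $\{x:0<_0x<_0\tfrac12\}$ (think of $A_a$). Its right end is a cut not realized in $A$, since $A$ has no element of order $2$. The same argument shows that no boolean combination of points, arcs with endpoints in $A$, and cosets of $G$ defines it.

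This is where your steps break:
\begin{itemize}
\item Step 5: in the torsion-free case, the preimage of an arc under $x\mapsto nx$ has endpoints of the form $d+\tfrac jn$ (in $\R/\Z$ notation), which are not in $A$ at all.
\item Step 4: it is wrong for $T\big((\vec 0,\vec\infty),(\vec 0,\vec 0)\big)$. There $G\cap\operatorname{Tor}(A)=0$, so $V(nx-c)$ defines the $n$ distinct cosets $d+t+G$ with $t\in\operatorname{Tor}_n(A)$.
\item Step 1: it is not legitimate, since $x\mapsto nx$ is not injective in the presence of torsion and does not preserve $\mathcal O$.
\end{itemize}

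\textbf{The missing idea.} Choose $\Omega$ semantically rather than by integral terms. Take all formulas whose instances are:
\begin{itemize}
\item[(a)] of size at most $1$;
\item[(b)] $<_0$-initial segments of $A$, with $0$ as least element (allowing $\emptyset$, $A$, and segments ending at unrealized cuts); or
\item[(c)] a single coset of $G$, or empty.
\end{itemize}
Any two initial segments are comparable, so your breadth argument goes through verbatim. The normalization then works atom by atom:
\begin{itemize}
\item $nx=s(\vec y)$ is the disjunction over $j<n$ of ``$x$ is the $j$-th solution of $nz=s(\vec y)$ in $<_0$'', each of type (a).
\item By divisibility of $G$, $V(nx+s(\vec y))\leftrightarrow\exists z\,(nz=-s(\vec y)\wedge V(x-z))$. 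Splitting over the $j$-th solution $z$ gives formulas of type (c).
\item Every instance of an $\mathcal O$-atom is a union of at most $N$ sets that are $<_0$-convex, with $N$ depending only on the coefficients of $x$. To see this, lift to $A^*$, where representatives of $n_ix+s_i$ are piecewise affine. As for weakly o-minimal theories, such an atom is then a boolean combination of $x=0$, of formulas in $\vec y$ alone (these define $\emptyset$ or $A$), and of the uniformly definable initial segments ``$\phi(\cdot;\vec y)$ changes truth value fewer than $j$ times on $(0,x]$''.
\end{itemize}
This also disposes of your worry about final arcs: $\mathcal O(0,t,x)$ defines the complement of $(0,t)_{\mathcal O}\cup\{0,t\}$.
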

\begin{proof}
As the same arguments apply to all the theories, we prove the statement only for $T\big((\vec{0},\vec{0}),(\vec{0},\vec{0})\big)$, so let $(A,G)\models T\big((\vec{0},\vec{0}),(\vec{0},\vec{0})\big)$. 

\medskip
Consider the set $\Omega$ containing the following $L_{\mathcal{O}}^p$-formulas in a single variable $x$ and a tuple $\vec{y}$ of various lengths:  
\begin{itemize}
\item[i.] $x=\vec{k}\vec{y}$,
    \item[ii.] $\mathcal{O}(0,x,\vec{k}\vec{y})$,
    \item[iii.] $V(x+\vec{k}\vec{y})$, 
\end{itemize}
where $\vec{k}$ ranges in $\mathbb{Z}^{|\vec{y}|}$.

\medskip
Since the theory $T\big((\vec{0},\vec{0}),(\vec{0},\vec{0})\big)$ eliminates quantifiers by Theorem \ref{qe_pairs_1}, it suffices by Theorem \ref{keytool} to show that any finite $\Delta\subseteq \Omega$ has UDTFS with $2$ parameters. By Proposition \ref{breadth}, we only show that for any finite $\Delta\subseteq \Omega$, the collection $\mathcal{S}_\Delta$ has breadth $2$. Take $B_1,\ldots,B_n\in \mathcal{S}_\Delta$ such that $\bigcap_{i=1}^n B_i\neq \emptyset$ and $n\geq 2$. 

\medskip
Observe first $B_i$ is either a singleton, or an orientation interval, or a coset of $G$ depending on whether it is defined by a formula of the form $(i), (ii)$, or $(iii)$. If one of the $B_i$ is a singleton, say $B_j$, then $\bigcap_{i=1}^n B_i=B_j$ as the intersection on the left is nonempty. Thus we may assume that there is no $B_i$ defined by a formula of the form $(i)$. 

\medskip
  If there are $B_i$ defined by $(ii)$, then their intersection, as it is nonempty, is given by the one whose supremum is the least one. Denote this intersection by $B_{i_2}$, with $i_2\in \{1,\ldots,n\}$. Note that any two cosets of $G$ intersect nontrivially if and only if they are the same coset. Thus, if there are $B_i$ defined by $(iii)$, their intersection can be given by any of them. Let $B_{i_3}$ denote this intersection, with $i_3\in \{1,\ldots,n\}$. Therefore, we conclude that $\bigcap_{i=1}^n B_i$ is equal to either $B_{i_2}$, or $B_{i_3}$, or $B_{i_2}\cap B_{i_3}$, depending on whether the sets $B_i$ defined by $(ii)$ or $(iii)$ occur among $B_1,\ldots,B_n$. Hence $\mathcal{S}_\Delta$ has breadth $2$.
\end{proof}

 \begin{corollary}\label{vcbound-L_O^p}
In the $L_{\mathcal{O}}^p$-theories $T\big((\vec{0},\vec{0}),(\vec{0},\vec{0})\big)$ and
$T\big((\vec{0},\vec{\infty}),(\vec{0},\vec{e})\big)$, where $\vec{e}\in \{\vec{0},\vec{\infty}\}$, any partitioned formula $\psi(\vec{x};\vec{y})$ has the VC-density at most $2|\vec{y}|$.
 \end{corollary}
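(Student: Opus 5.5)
This corollary should follow directly from Theorem \ref{mainthm_2} and Corollary \ref{vc_d_vcdensity}.

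By Theorem \ref{mainthm_2}, each theory in the statement has the VC $2$ property. Corollary \ref{vc_d_vcdensity} says that any complete theory with the VC $d$ property satisfies $\vc(\psi)\le d|\vec{y}|$ for every partitioned formula $\psi(\vec{x};\vec{y})$. Taking $d=2$ gives the bound $2|\vec{y}|$ for an arbitrary partitioned $L_{\mathcal{O}}^p$-formula, with any number of object variables.

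The only hypothesis left to check is completeness, which Corollary \ref{vc_d_vcdensity} uses implicitly. The VC $d$ property is defined for complete theories, and $\vc(\psi)$ is well defined only relative to a complete theory. So I would first note that each of $T\big((\vec{0},\vec{0}),(\vec{0},\vec{0})\big)$ and $T\big((\vec{0},\vec{\infty}),(\vec{0},\vec{e})\big)$ is complete. For the divisible pairs this is the completeness result from \cite{melissa} recalled in Section \ref{oriented_groups_section}. It can also be recovered from the quantifier elimination in Theorems \ref{qe_pairs_1}, \ref{qe_pairs_2} and \ref{qe_pairs_3}. Every model contains the same prime substructure, namely the zero element together with the torsion subgroup in the torsion cases, and its quantifier-free diagram is fixed by the axioms.

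With completeness in hand, I would invoke Corollary \ref{vc_d_vcdensity} with $d=2$ for each theory in turn. I expect no real obstacle: all the substantive work lies in the breadth-$2$ argument behind Theorem \ref{mainthm_2}, and this corollary is a formal consequence of it.
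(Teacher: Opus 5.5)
Your proposal is correct and follows the paper's own route: the corollary is read off from the VC $2$ property of Theorem \ref{mainthm_2} via Corollary \ref{vc_d_vcdensity} with $d=2$. The completeness check is a harmless addition, since the paper takes completeness from the earlier work recalled in Section \ref{oriented_groups_section}. If you instead derive it from quantifier elimination, note that the substructure generated by $\emptyset$ is just $\{0\}$, and that alone suffices, so the torsion subgroup need not be mentioned.
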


Given a formula $\psi$, one can easily see that if the VC-density of $\psi$ is finite, then so is its VC-dimension (see \cite{VC-density-I}). Using the Corollary above and the connection of VC-dimension with NIP theories, we derive the following result.

\begin{corollary}
     The $L_{\mathcal{O}}^p$-theory $T\big((\vec{0},\vec{0}),(\vec{0},\vec{0})\big)$ has NIP.
\end{corollary}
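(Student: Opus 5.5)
The corollary follows by chaining results already in hand. By Corollary \ref{vcbound-L_O^p}, every partitioned $L_{\mathcal{O}}^p$-formula $\psi(\vec{x};\vec{y})$ has $\vc(\psi)\le 2|\vec{y}|<\infty$ in $T\big((\vec{0},\vec{0}),(\vec{0},\vec{0})\big)$. Since this theory is complete, NIP can be checked formula by formula: by Laskowski's characterization \cite{L}, it suffices to show that $\VC(\psi)<\infty$ for every partitioned formula $\psi$.

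The key step is the implication from finite VC-density to finite VC-dimension. I would argue via the dual shatter function. If $\psi$ had infinite VC-dimension, then for every $N$ there would be $N$ parameter tuples whose traces realize all $2^N$ patterns. This gives $\pi^*_\psi(N)=2^N$ for infinitely many, and in fact all, $N$, by passing to subsets of a shattered set. That contradicts $\pi^*_\psi(N)=O(N^{d})$ for $d=2|\vec{y}|$, which is exactly what a VC-density bound of $d$ provides, since $\vc(\psi)$ is defined as the infimum of exponents $r$ with $\pi^*_\psi(N)=O(N^r)$. The only care needed is the convention of \cite{VC-density-I}, where VC-density is defined through the dual shatter function. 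Finite dual VC-dimension is equivalent to finite VC-dimension by the standard bound $\VC(\psi^*)<2^{\VC(\psi)+1}$, so either convention suffices.

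I expect no real obstacle. The substantive work was the VC $2$ property, which gives the VC-density bound, and that has already been done. The only thing to verify is that Corollary \ref{vcbound-L_O^p} applies to all partitioned formulas, not only those with a single object variable. This holds by Corollary \ref{vc_d_vcdensity}, since the VC $d$ property yields the bound for arbitrary $\vec{x}$. The same argument also gives NIP for the other two theories $T\big((\vec{0},\vec{\infty}),(\vec{0},\vec{e})\big)$.
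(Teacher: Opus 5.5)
Your proposal is correct and matches the paper's argument: the bound $\vc(\psi)\le 2|\vec{y}|$ from the preceding corollary forces finite VC-dimension for every formula, and Laskowski's characterization then gives NIP. Your comparison of $2^N$ against a polynomial bound and your appeal to Assouad's dual bound spell out the step the paper calls easy; strictly, a density bound of $d$ only guarantees $O(N^{d+\varepsilon})$ for each $\varepsilon>0$ rather than $O(N^{d})$, but any polynomial bound suffices.
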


\subsection{\boldmath Sharpness Of The Bounds And Consequences}

Note first that in any model of a complete theory without finite models, one can always find a formula whose VC-density is exactly the size of its parameter tuple. Indeed, the formula $\bigvee_{i=1}^{n} x=y_i$ witnesses this fact. Therefore, the VC $1$ property yields the best possible uniform bound on VC-densities of formulas in terms of the length of the parameter variables. Consequently, the bound established in Corollary \ref{vcbound-L_O} is optimal, whereas the bound obtained in Corollary \ref{vcbound-L_O^p} need not be. We start with proving that it is also sharp.


\medskip
In order to prove our claim, we introduce the following partitioned $L_{\mathcal{O}}^p$-formula in the single object variable $x$ and single parameter variable $y$:
$$\sigma(x;y): \mathcal{O}(y,x,2y) \lor V(x-y).$$

\begin{proposition}\label{optimality}
    In any model of any $L_{\mathcal{O}}^p$-theory appearing in Theorem \ref{mainthm_2}, the $L_{\mathcal{O}}^p$-formula $\sigma(x;y)$ has the VC-density $2$.
\end{proposition}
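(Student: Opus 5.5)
The upper bound $\vc(\sigma)\le 2$ is immediate from Corollary \ref{vcbound-L_O^p}, since $|y|=1$. So the real task is the lower bound. I will show that the shatter function satisfies $\pi_\sigma(n)\ge cn^2$ for some constant $c>0$ and all $n$. Concretely, I will build an $n$-element set $P\subseteq A$ whose traces $P\cap\sigma(A;b)$, as $b$ ranges over $A$, number on the order of $n^2/2$. The formula defines $\sigma(A;b)=(b,2b)_{\mathcal{O}}\cup(b+G)$. The idea is that the interval part contributes one "cut" parameter and the coset part contributes an independent second choice. Together they give two degrees of freedom.

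\emph{Step 1 (a well-behaved arc).} Using regular density with $p=2$, fix $c\in A$ with $c<_0 2c$. The plan is to work inside the arc $I=(c,2c)_{\mathcal{O}}$, or a suitable sub-arc of it. On $I$ the orientation should behave like a linear order compatible with doubling. The property I need is: if $b,a\in I$ and $b<_0 a$, then $\mathcal{O}(b,a,2b)$. Intuitively $2b$ lies beyond $2c$, hence beyond all of $I$. If necessary I will shrink $I$, for instance to $(c,\tfrac32 c)$ using divisibility, to make this hold. I will verify it either by passing to the associated ordered group $A^*$ or directly from the compatibility of $\mathcal{O}$ with $+$.

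I expect this orientation bookkeeping to be the main technical obstacle. In the torsion case especially, one must make sure that no wrap-around occurs. I would first try to settle it by choosing $c$ so that $c<_0 2c<_0 3c<_0 4c$, obtained by iterating regular density or via divisibility. Then everything of interest lives in a region where $<_0$ is additive.

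\emph{Step 2 (choice of points and parameters).} Since $A/G$ is infinite and every coset of $G$ is dense (because $G$ is dense), I can choose $a_1<_0 a_2<_0\cdots<_0 a_n$ in $I$ lying in pairwise distinct cosets of $G$. Fix $k\in\{0,\dots,n\}$ and $j\in\{1,\dots,n\}$. The arc $(a_k,a_{k+1})_{\mathcal{O}}\cap I$ is nonempty and open, where $a_0,a_{n+1}$ denote the endpoints of $I$. The coset $a_j+G$ is dense, so this arc contains some $b_{k,j}\in a_j+G$.

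By Step 1, $(b_{k,j},2b_{k,j})_{\mathcal{O}}\cap P=\{a_i: i>k\}$. The cosets are distinct, so $(b_{k,j}+G)\cap P=\{a_j\}$. Hence the trace of $\sigma(A;b_{k,j})$ on $P$ is $\{a_i:i>k\}\cup\{a_j\}$.

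\emph{Step 3 (counting).} Take pairs $(k,j)$ with $j\le k$, so that $a_j$ is not already in the final segment. These pairs give pairwise distinct traces. Indeed, $k$ is recovered as the least index $i>j$ with $a_i$ in the trace, minus one, or from the size of the final segment, and $j$ is the unique index $\le k$ in the trace. This yields at least $n(n+1)/2$ distinct traces, so $\pi_\sigma(n)\ge n^2/2$ and therefore $\vc(\sigma)\ge 2$.

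The argument uses only density of $G$, infinitude of $A/G$, divisibility and regular density. It therefore applies uniformly to all the theories listed in Theorem \ref{mainthm_2}, and combined with the upper bound it gives $\vc(\sigma)=2$.
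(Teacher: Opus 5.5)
Your proof is correct and follows the paper's strategy. The upper bound comes from Corollary \ref{vcbound-L_O^p}, and the lower bound is a quadratic count of traces of $\sigma$ on points lying in pairwise distinct (dense) cosets of $G$, using parameters chosen in a prescribed coset inside a prescribed arc. The only difference is the configuration. The paper spaces its points ($x_i<_0 2x_i<_0 x_{i+1}$) so that $(\alpha,2\alpha)_{\mathcal{O}}$ isolates a single point and every subset of size $\le 2$ is cut out, whereas you cluster the points in a short arc where $(b,2b)_{\mathcal{O}}$ cuts out final segments. One small point: your Step 1 as literally stated only puts the points $a_i$ with $b<_0 a_i$ into $(b,2b)_{\mathcal{O}}$. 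It is your no-wrap choice $c<_0 2c<_0 3c<_0 4c$ that also keeps the points $a_i<_0 b$ out of $(b,2b)_{\mathcal{O}}$. Such a $c$ is given directly by regular density with $p=5$, whereas halving in a torsion-free $A$ may yield a $d$ with $2d<_0 d$.
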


\begin{proof}
Our proof will focus on a model $(A,G)$ of $ T\big((\vec{0},\vec{0}),(\vec{0},\vec{0})\big)$ but the proof is valid for any $L_{\mathcal{O}}^p$-theory in Theorem \ref{mainthm_2}. 

\medskip\noindent
We show that for any $k\geq 2$, there is a $k$-element subset of $A$ whose subsets of size $\leq 2$ are all cut out by $\sigma$. For a fixed $k\geq 2$, take $B=\{x_1,\ldots,x_k\}\subseteq A$ such that $\mathcal{O}(0,x_i,x_j)$ for any $i<j$ and $x_i-x_j\notin G$ for any $i\neq j$. Suppose also that $$x_i<_0 2x_i<_0 x_{i+1}$$
for each $i=1,\ldots,k-1$. We have clearly then $\{x_{i}\}=\sigma(A;x_i)\cap B$. 

\medskip\noindent
By using the density of $G$, for any $i\neq j$ there exists $\alpha_i\in A$ such that $x_j\in \alpha_i +G$ and
$$x_{i-1}<_0\alpha_i <_0x_i<_02\alpha_i <_0x_{i+1}.$$
By assumption on $x_i$, we get that
\beq \nonumber 
\{x_i,x_j\}=\sigma(A;\alpha_i)\cap B.
\eeq 

\medskip\noindent
Hence $\sigma$ cuts out any subset of $B$ of cardinality $\leq 2$. It follows then 
\beq \nonumber
\pi_{\sigma}(k)\geq \binom{k}{0}+\binom{k}{1}+\binom{k}{2}.
\eeq
Combining this inequality with Corollary \ref{vcbound-L_O^p}, we get the following desired result
\beq \nonumber 
\vc(\sigma)= 2>|y|=1.
\eeq 
\end{proof}

\begin{proposition}\label{non_dpminimality-L_O^p}
None of the $L_{\mathcal{O}}^p$-theories in Theorem \ref{mainthm_2} is dp-minimal.
\end{proposition}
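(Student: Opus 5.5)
To show that none of these theories is dp-minimal, I will exhibit an ICT pattern of depth $2$ in a single variable $x$. Recall that a theory is dp-minimal exactly when there is no such pattern. Concretely, I need two formulas $\varphi_1(x;y)$ and $\varphi_2(x;y)$, together with parameter sequences $(a_i)_{i<\omega}$ and $(b_j)_{j<\omega}$, such that for every pair $(i,j)$ the following partial type is consistent:
\[
\{\varphi_1(x;a_i),\ \varphi_2(x;b_j)\}\cup\{\neg\varphi_1(x;a_{i'}): i'\neq i\}\cup\{\neg\varphi_2(x;b_{j'}) : j'\neq j\}.
\]
The natural choice splits the formula $\sigma$ from Proposition \ref{optimality} into its two pieces: $\varphi_1(x;y)=V(x-y)$, which defines cosets of $G$, and $\varphi_2(x;y)=\mathcal{O}(y,x,2y)$, which defines orientation intervals. (As an alternative route, one could instead use the known equivalence between dp-minimality and $\vc$-bounds for formulas in one object variable, but the direct ICT construction is cleaner.)

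For the coset formula, I choose $(a_i)_{i<\omega}$ in $A$ lying in pairwise distinct cosets of $G$. This is possible because $A/G$ is infinite.

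For the interval formula, I choose $(b_j)_{j<\omega}$ so that the intervals $(b_j,2b_j)_{\mathcal{O}}$ are nonempty and pairwise disjoint. For example, I take $b_j$ close to $0$ on the positive side with $b_{j+1}<_0 2b_{j+1}<_0 b_j$. Divisibility and regular density make this possible: inside $(0,b_j)_{\mathcal{O}}$, take a sufficiently small $c$ so that $c<_0 2c <_0 b_j$ with no wrap-around, and set $b_{j+1}=c$. In a saturated model this even works directly for an $\omega$-sequence. A simpler choice would also do: pick any $\omega$ many pairwise disjoint nonempty orientation intervals $(u_j,v_j)_{\mathcal{O}}$ and use the formula $\mathcal{O}(y_1,x,y_2)$ with a pair of parameters. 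However, one-variable parameters are not required for an ICT pattern, so either choice is fine.

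To realize the type for $(i,j)$, I need an element $x\in (a_i+G)\cap(b_j,2b_j)_{\mathcal{O}}$. Since $G$ is dense in $A$, the coset $a_i+G$ is dense as well, so it meets the nonempty open interval. Such an $x$ automatically avoids every other coset $a_{i'}+G$, because the cosets are pairwise distinct. It also avoids every other interval $(b_{j'},2b_{j'})_{\mathcal{O}}$, because the intervals are pairwise disjoint. Hence the type is realized, the pattern has depth $2$, and the dp-rank is at least $2$.

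I expect the only point needing care to be the construction of pairwise disjoint intervals of the exact form $(b,2b)_{\mathcal{O}}$ in the oriented (circular) setting. Specifically, I must check that no doubling wraps past $0$; this is exactly where the choice of very small $b_j$ and regular density enter. If this becomes awkward, I will fall back on the two-parameter interval formula described above. Finally, the upper bound of dp-rank $2$ follows from Corollary \ref{vcbound-L_O^p}, via the VC $2$ property.
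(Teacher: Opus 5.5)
Your argument is correct and is essentially the paper's proof. It uses the same two formulas $\mathcal{O}(y,x,2y)$ and $V(x-y)$ to form an ICT pattern of depth $2$, realized by density of each coset of $G$ in a nonempty orientation interval, with distinct cosets and disjoint intervals excluding the other rows. The one difference is in how disjointness of the intervals is secured. You use independent sequences and the decreasing choice $b_{j+1}<_0 2b_{j+1}<_0 b_j$, which makes the intervals $(b_j,2b_j)_{\mathcal{O}}$ pairwise disjoint in any model. The paper's stated condition $\alpha_i<_0\alpha_{i+1}<_0 2\alpha_i$ would make consecutive intervals overlap, so it has to be read as $\alpha_i<_0 2\alpha_i<_0\alpha_{i+1}$.
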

\begin{proof}
 Let $T_{\mathcal{O}}^p$ be any of the theories in Theorem \ref{mainthm_2} and $(A,G)$ be a model of $T_{\mathcal{O}}^p$. Let $\phi(x;y)$ and $\psi(x;y)$ be the formulas $\mathcal{O}(y,x,2y)$ and $V(x-y)$, respectively. We find two sequences $(\alpha_i)_{i\in\N}$ and $(\beta_i)_{i\in\N}$ in $A$ such that for some $i,j\in \N$, the following set of $L_{\mathcal{O}}^p$-formulas: 
 \[
    \{\phi(x;\alpha_i),\psi(x;\beta_j)\}\cup\{\lnot \phi(x;\alpha_k):k\neq i\}\cup\{\lnot \psi(x;\beta_l):l\neq j\},
  \]
is realized in $(A,G)$.

\medskip
Let $(\alpha_i)_{i\in\N}$ be such that $\mathcal{O}(0,x_i,x_j)$ for any $i<j$ and $\alpha_i<_0 \alpha_{i+1}<_0 2\alpha_i$ for each $i\in\N$. Assume also that $\alpha_i-\alpha_j\notin G$ for $i\neq j$. Take $(\beta_i)_{i\in\N}$ with $\beta_i=\alpha_i$ for all $i\in\N$. Fixing $i,j\in\N$ and applying the density of $G$ in $A$, we find some 
\beq \nonumber c\in(\alpha_i,2\alpha_i)_\mathcal{O}\cap \alpha_j+G.\eeq By the construction of the sequences, $c$ is not contained in \beq \nonumber \bigcup_{k\neq i,l\neq j}(\alpha_k,2\alpha_k)_\mathcal{O}\cup \alpha_l+G.\eeq
The proof holds in any model of any $L_{\mathcal{O}}^p$-theories, hence we are done.  
\end{proof}

Although dp-minimality fails in the case of pairs, we are able to prove finite dp-rank in each of the corresponding theories. In \cite{VC-density-II}, it is shown that if a theory has the dp-rank $\geq n$, then there is a formula $\psi$ in a single parameter variable such that $\vc(\psi)\geq n$. Combining this with Corollary \ref{vcbound-L_O^p} and Proposition \ref{optimality}, we get the following result.

\begin{corollary}
Given $\vec{e}\in \{\vec{0},\vec{\infty}\}$,  each of the $L_{\mathcal{O}}^p$-theories $T\big((\vec{0},\vec{\infty}),(\vec{0},\vec{e})\big)$ and $T\big((\vec{0},\vec{0}),(\vec{0},\vec{0})\big)$ has the dp-rank $2$.
\end{corollary}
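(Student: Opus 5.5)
Since dp-rank $\geq 2$ is witnessed by Proposition \ref{non_dpminimality-L_O^p}, the task is an upper bound: dp-rank $\leq 2$. The pattern is fixed by the paragraph preceding the statement. For each theory in the statement, we take the result from \cite{VC-density-II} in contrapositive form. If the theory had dp-rank $\geq 3$, then some formula $\psi(x;y)$ with a single parameter variable $|y|=1$ would satisfy $\vc(\psi)\geq 3$.

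Corollary \ref{vcbound-L_O^p} gives $\vc(\psi)\leq 2|y|$ for every partitioned formula in these theories. Applied to such a $\psi$, this yields $\vc(\psi)\leq 2<3$, a contradiction. Hence the dp-rank is at most $2$. One point to check is that the cited result concerns the form of dp-rank relevant here, namely rank in the single object variable $x$. We would also verify that the witnessing formula it produces has $|y|=1$, or at least a parameter tuple small enough that the bound $2|y|$ still comes in below the rank. If the cited statement instead gives a formula in a single object variable with parameters of arbitrary length, the argument would have to pass through the VC $2$ property of Theorem \ref{mainthm_2} directly. Since dp-rank is computed via mutually indiscernible sequences over a single element, a bound of density $\leq 2$ over one parameter is exactly what is needed.

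For the lower bound dp-rank $\geq 2$, there are two routes. The first is Proposition \ref{non_dpminimality-L_O^p}. It exhibits an ict-pattern of depth $2$ using the formulas $\phi(x;y)=\mathcal{O}(y,x,2y)$ and $\psi(x;y)=V(x-y)$ with the sequences $(\alpha_i)$ and $(\beta_j)$, and it realizes every cell $\{i,j\}$. The second is Proposition \ref{optimality}, which shows $\vc(\sigma)=2$ for the single-parameter formula $\sigma$ that is the disjunction of these two. The ict-pattern route is the cleaner one, since it directly gives dp-rank $\geq 2$ (non dp-minimality). For that route we would confirm that the rows can be taken to be indiscernible, by Ramsey and compactness applied to the pattern, which is standard.

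The main obstacle is purely bookkeeping: matching the definition of dp-rank used in \cite{VC-density-II} with the one the paper intends, and ensuring that the upper bound is applied in the same variable setting. The model-theoretic content is already contained in Theorem \ref{mainthm_2} and Proposition \ref{non_dpminimality-L_O^p}.
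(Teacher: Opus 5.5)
Your proof follows the paper. The upper bound is the contrapositive of the cited result from \cite{VC-density-II}: dp-rank $\geq 3$ would give a formula with one parameter variable and VC-density $\geq 3$, which contradicts the bound $\vc(\psi)\leq 2|y|=2$ of Corollary~\ref{vcbound-L_O^p}. The lower bound is the failure of dp-minimality in Proposition~\ref{non_dpminimality-L_O^p}. One caution: your ``second route'' through $\vc(\sigma)=2$ (Proposition~\ref{optimality}) does not by itself give dp-rank $\geq 2$, because the cited implication runs from dp-rank to VC-density and not the other way. You are right to rest the lower bound on the ict-pattern, even though the paper's closing sentence cites Proposition~\ref{optimality} at that point.
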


\section*{Acknowledgements}
We would like to thank Ayhan Günaydın for providing insightful comments on an earlier draft, which significantly improved our results.


\end{document}